\renewenvironment{thebibliography}[1]
{\section*{\refname\@mkboth{\refname}{\refname}}%
  \list{\@biblabel{\@arabic\c@enumiv}}%
       {\settowidth\labelwidth{\@biblabel{#1}}%
        \leftmargin\labelwidth
        \advance\leftmargin\labelsep
 \setlength\itemsep{-2pt}
 \setlength\baselineskip{11pt}
        \@openbib@code
        \usecounter{enumiv}%
        \let\p@enumiv\@empty
        \renewcommand\theenumiv{\@arabic\c@enumiv}}%
  \sloppy
  \clubpenalty4000
  \@clubpenalty\clubpenalty
  \widowpenalty4000%
  \sfcode`\.\@m}
 {\def\@noitemerr
   {\@latex@warning{Empty `thebibliography' environment}}%
  \endlist}
\newtheorem{thm}{Theorem}[section]
\newtheorem{prop}[thm]{Proposition}
\newtheorem{lem}[thm]{Lemma}
\newtheorem{cor}[thm]{Corollary}
\theoremstyle{definition}
\newtheorem{dfn}[thm]{Definition}
\theoremstyle{remark}
\newtheorem*{ntn}{Notation}
\numberwithin{equation}{section}
\newcommand{\arxiv}[1]{\href{http://arxiv.org/abs/#1}{\texttt{arXiv:#1}}}
\newcommand{\curv}{\mathop{\mathrm{curv}}\nolimits}
\newcommand{\HM}{\mathscr{H}_{\Gamma}}
\newcommand{\id}[1]{\mathrm{id}_{#1}}
\newcommand{\rad}{\mathop{\mathrm{rad}}\nolimits}
\newcommand{\supp}{\mathop{\mathrm{supp}}\nolimits}
\newcommand{\TDT}{\mathsf{TDT}^{+}}
\newcommand{\TDTo}{\mathsf{TDT}^{\oplus}}
\begin{document}
\title{The Laplacian on some self-conformal fractals\\
and Weyl's asymptotics for its eigenvalues:\\
A survey of the ergodic-theoretic aspects}
\author{Naotaka Kajino\thanks{This work was supported by JSPS KAKENHI Grant Numbers JP25887038, JP15K17554, JP18K18720.}\\
Graduate School of Science, Kobe University}
\date{}
\maketitle

\begin{abstract}
This short survey is aimed at sketching the ergodic-theoretic aspects of the author's recent
studies on Weyl's eigenvalue asymptotics for a \emph{``geometrically canonical'' Laplacian}
defined by the author on some self-conformal circle packing fractals including the classical
\emph{Apollonian gasket}. The main result being surveyed is obtained by applying Kesten's
renewal theorem [\emph{Ann.\ Probab.}\ \textbf{2} (1974), 355--386, Theorem 2] for functionals
of Markov chains on general state spaces and provides an alternative probabilistic proof of
the result by Oh and Shah [\emph{Invent.\ Math.}\ \textbf{187} (2012), 1--35, Corollary 1.8]
on the asymptotic distribution of the circles in the Apollonian gasket.
\end{abstract}

%
\section{Introduction}\label{sec:intro}
This short survey concerns the author's recent studies in
\cite{K:MFO2016,K:WeylAG,K:WeylDCGMaskit,K:WeylRSC} on Weyl's eigenvalue asymptotics
for a \emph{``geometrically canonical'' Laplacian} defined by the author on circle packing
fractals which are invariant with respect to certain Kleinian groups (i.e., discrete
groups of M\"{o}bius transformations on $\widehat{\mathbb{C}}:=\mathbb{C}\cup\{\infty\}$),
including the classical \emph{Apollonian gasket} (Figure \ref{fig:AGs}).
Here we focus on sketching the ergodic-theoretic aspects of the proof of the eigenvalue
asymptotics, which in fact serves as an alternative proof, though limited to the cases
of certain specific Kleinian groups, of the result by Oh and Shah in
\cite{OhShah:InventMath2012} on the asymptotic distribution of the circles in circle
packing fractals invariant with respect to a very large class of Kleinian groups;
see \cite{K:WeylSurvAnal} for a survey of the analytic aspects of the author's
``geometrically canonical'' Laplacian. Also we restrict our attention to the case
of the Apollonian gasket for simplicity of the presentation.
%
\begin{ntn}
\begin{itemize}[label=\textup{(1)},align=left,leftmargin=*,parsep=0pt,itemsep=0pt]
\item[\textup{(1)}]We adopt the convention that $\mathbb{N}:=\{n\in\mathbb{Z}\mid n>0\}$,
	i.e., $0\not\in\mathbb{N}$.
\item[\textup{(2)}]The cardinality (the number of elements) of a set $A$ is denoted by $\#A$.
\item[\textup{(3)}]The closure and boundary of $A\subset\mathbb{C}$ in $\mathbb{C}$
	are denoted by $\overline{A}$ and $\partial A$, respectively.
\item[\textup{(4)}]Let $n\in\mathbb{N}$.
	The Euclidean norm on $\mathbb{R}^{n}$ is denoted by $|\cdot|$ and the operator norm
	of a real $n\times n$ matrix $M$ with respect to $|\cdot|$ is denoted by $\|M\|$.
\end{itemize}
\end{ntn}
\section{The Apollonian gasket and its fractal geometry}\label{sec:AG-geometry}
In this section, we introduce the Apollonian gasket and state its geometric
properties needed for our purpose. The following definition and proposition form
the basis of the construction and further detailed studies of the Apollonian gasket.
\begin{dfn}[tangential disk triple, ideal triangle]\label{dfn:tangential-disk-triple}
\begin{itemize}[label=\textup{(0)},align=left,leftmargin=*,parsep=0pt,itemsep=0pt]
\item[\textup{(0)}]We set $S:=\{1,2,3\}$.
\item[\textup{(1)}]Let $D_{1},D_{2},D_{3}\subset\mathbb{C}$ be either three open disks
	or two open disks and an open half-plane. The triple $\mathscr{D}:=(D_{1},D_{2},D_{3})$
	of such open subsets of $\mathbb{C}$ is called a \emph{tangential disk triple}
	if and only if $\#(\overline{D_{j}}\cap\overline{D_{k}})=1$
	(i.e., $D_{j}$ and $D_{k}$ are \emph{externally} tangent) for any $j,k\in S$
	with $j\not=k$.
\item[\textup{(2)}]Let $\mathscr{D}=(D_{1},D_{2},D_{3})$ be a tangential disk triple.
	The open subset $\mathbb{C}\setminus\bigcup_{j\in S}\overline{D_{j}}$ of
	$\mathbb{C}$ is then easily seen to have a unique bounded connected component,
	which is denoted by $T(\mathscr{D})$ and called the \emph{ideal triangle}
	associated with $\mathscr{D}$.
\item[\textup{(3)}]A tangential disk triple $\mathscr{D}=(D_{1},D_{2},D_{3})$ is
	called \emph{positively oriented} if and only if its associated ideal triangle
	$T(\mathscr{D})$ is to the left of $\partial T(\mathscr{D})$
	when $\partial T(\mathscr{D})$ is oriented so as to have
	$\{q_{j}(\mathscr{D})\}_{j=1}^{3}$ in this order, where
	$\{q_{j}(\mathscr{D})\}:=\overline{D_{k}}\cap\overline{D_{l}}$
	for $\{j,k,l\}=S$.
\end{itemize}
\noindent
Finally, we define
$\TDT:=\{\mathscr{D}\mid\textrm{$\mathscr{D}$ is a positively oriented tangential disk triple}\}$ and
$\TDTo:=\{\mathscr{D}\mid\textrm{$\mathscr{D}=(D_{1},D_{2},D_{3})\in\TDT$, $D_{1},D_{2},D_{3}$ are disks}\}$.
\end{dfn}
The following proposition is classical and can be shown
by some elementary (though lengthy) Euclidean-geometric arguments.
We set $\rad(D):=r$ and $\curv(D):=r^{-1}$ for an open disk $D\subset\mathbb{C}$
of radius $r$ and $\curv(D):=0$ for an open half-plane $D\subset\mathbb{C}$.
\begin{prop}\label{prop:circumscribed-inscribed}
Let $\mathscr{D}=(D_{1},D_{2},D_{3})\in\TDT$ and set $\alpha:=\curv(D_{1})$,
$\beta:=\curv(D_{2})$, $\gamma:=\curv(D_{3})$ and
$\kappa:=\kappa(\mathscr{D}):=\sqrt{\beta\gamma+\gamma\alpha+\alpha\beta}$.%
\begin{itemize}[label=\textup{(1)},align=left,leftmargin=*,topsep=0pt,parsep=0pt,itemsep=0pt]
\item[\textup{(1)}]Let $D_{\mathrm{cir}}(\mathscr{D})\subset\mathbb{C}$ denote
	the \emph{circumscribed disk} of $T(\mathscr{D})$, i.e., the unique open disk with
	$\{q_{1}(\mathscr{D}),q_{2}(\mathscr{D}),q_{3}(\mathscr{D})\}
		\subset\partial D_{\mathrm{cir}}(\mathscr{D})$.
	Then $\partial D_{\mathrm{cir}}(\mathscr{D})$ is orthogonal to
	$\partial D_{j}$ for any $j\in S$,
	$\overline{T(\mathscr{D})}\setminus\{q_{1}(\mathscr{D}),q_{2}(\mathscr{D}),q_{3}(\mathscr{D})\}
		\subset D_{\mathrm{cir}}(\mathscr{D})$,
	and $\curv(D_{\mathrm{cir}}(\mathscr{D}))=\kappa$.
\item[\textup{(2)}]There exists a unique \emph{inscribed disk}
	$D_{\mathrm{in}}(\mathscr{D})$ of $T(\mathscr{D})$,
	i.e., a unique open disk $D_{\mathrm{in}}(\mathscr{D})\subset\mathbb{C}$
	such that $D_{\mathrm{in}}(\mathscr{D})\subset T(\mathscr{D})$ and
	$\#(\overline{D_{\mathrm{in}}(\mathscr{D})}\cap\overline{D_{j}})=1$
	for any $j\in S$. Moreover,
	$\curv(D_{\mathrm{in}}(\mathscr{D}))=\alpha+\beta+\gamma+2\kappa$.
\end{itemize}
\end{prop}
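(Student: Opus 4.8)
The plan is to prove both statements by reducing, via a Möbius transformation, to the most symmetric configuration and then invoking Möbius-invariance of the relevant objects (tangency, orthogonality, and the notion of circumscribed/inscribed disk), followed by a short Descartes-circle-theorem computation for the curvature formulas. Concretely, first I would dispose of the degenerate case where one of $D_1,D_2,D_3$ is a half-plane: an inversion centered at a point not lying on any of the three boundary circles or the boundary line turns $\mathscr{D}$ into an element of $\TDTo$, and since inversions map circles and lines to circles and lines, preserve tangency and angles, and carry the ideal triangle to the ideal triangle of the image triple, it suffices to prove everything for $\mathscr{D}\in\TDTo$.

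For $\mathscr{D}\in\TDTo$, I would next normalize by a further Möbius transformation. The three tangency points $q_1(\mathscr{D}),q_2(\mathscr{D}),q_3(\mathscr{D})$ are three distinct points of $\widehat{\mathbb{C}}$, so there is a Möbius transformation sending them to the cube roots of unity; alternatively one can send $\mathscr{D}$ to the standard symmetric tangential triple of three mutually tangent unit disks. In that normalized picture everything in part (1) is transparent: the unique circle through the three tangency points is concentric with the configuration, meets each $\partial D_j$ at right angles by symmetry, and visibly contains $\overline{T(\mathscr{D})}$ minus the three tangency points in its open disk. Because $\partial D_{\mathrm{cir}}$ is characterized as \emph{the} circle through $q_1,q_2,q_3$, and because orthogonality of circles, the inclusion relations among the closed/open regions, and the property $D\subset T(\mathscr{D})$ with single-point contact with each $\overline{D_j}$ are all preserved under Möbius transformations (one must choose the normalizing map so that it does not send any relevant point to $\infty$ inside a disk, which is arranged by an auxiliary inversion if needed), the qualitative assertions of (1) and the existence and uniqueness in (2) follow for general $\mathscr{D}\in\TDTo$, hence for general $\mathscr{D}\in\TDT$.

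It remains to compute the two curvatures, and here Möbius-invariance no longer helps because curvature is not a Möbius invariant; instead I would use the Descartes circle theorem. Writing $a:=\curv(D_1)$, $b:=\curv(D_2)$, $c:=\curv(D_3)$, the inscribed disk $D_{\mathrm{in}}(\mathscr{D})$ is one of the two disks tangent to all three $D_j$, so its curvature $d$ satisfies $(a+b+c+d)^2=2(a^2+b^2+c^2)+2d^2$ — wait, more precisely $d$ is a root of $d^2-2(a+b+c)d+(a^2+b^2+c^2-2bc-2ca-2ab)=0$, whose roots are $a+b+c\pm2\sqrt{ab+bc+ca}=a+b+c\pm2\kappa$; the inscribed (smaller, interior) one is the $+$ root, giving $\curv(D_{\mathrm{in}}(\mathscr{D}))=\alpha+\beta+\gamma+2\kappa$. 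For the circumscribed disk, $\partial D_{\mathrm{cir}}$ is orthogonal to all three $\partial D_j$; a circle orthogonal to $\partial D_j$ can be regarded as a "circle of curvature $-\curv$" in the Descartes-type relation for orthogonal (rather than tangent) circles, or more cleanly one applies the known companion formula for the circle orthogonal to three mutually tangent circles, which yields curvature exactly $\sqrt{ab+bc+ca}=\kappa$; alternatively, invert at one tangency point $q_j$ to send two of the disks to parallel half-planes and the third plus $\partial D_{\mathrm{cir}}$ to two circles, where the computation becomes an elementary exercise with strips. The main obstacle is this last curvature bookkeeping for $D_{\mathrm{cir}}$: one must set up the right sign conventions in the Descartes-type identity for an orthogonal circle and verify that the relevant root is $+\kappa$ rather than its negative, which I would pin down by checking the equilateral case $\alpha=\beta=\gamma$ where $\kappa=\sqrt3\,\alpha$ and the circumradius of the ideal triangle is directly computable.
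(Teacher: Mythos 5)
The paper itself gives no proof of this proposition: it is declared classical, to be checked ``by some elementary (though lengthy) Euclidean-geometric arguments,'' so there is no official argument to match yours against. Your route --- Möbius reduction to handle the qualitative assertions (orthogonality, the inclusion $\overline{T(\mathscr{D})}\setminus\{q_{1},q_{2},q_{3}\}\subset D_{\mathrm{cir}}(\mathscr{D})$, existence and uniqueness of the inscribed disk) followed by the Descartes circle theorem and the companion formula for the circle orthogonal to three mutually tangent circles to get the two curvatures --- is a sound and standard way to carry this out, and checking the equilateral case $\alpha=\beta=\gamma$, $\kappa=\sqrt{3}\,\alpha$ is indeed the right way to fix the sign for $\curv(D_{\mathrm{cir}}(\mathscr{D}))=\kappa$ (the theorem applies verbatim when one curvature vanishes, so the half-plane case causes no trouble in the curvature computations).

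Two details deserve the same care you promise for $D_{\mathrm{cir}}(\mathscr{D})$. First, in the reduction of the half-plane case, the inversion centre must be chosen not merely off the three boundary curves but outside $\overline{D_{1}}\cup\overline{D_{2}}\cup\overline{D_{3}}$ and in the \emph{unbounded} complementary component: otherwise either some image region contains $\infty$ (so is not a disk in $\mathbb{C}$) or $T(\mathscr{D})$ is sent to the unbounded component rather than to the ideal triangle of the image triple; also an inversion reverses orientation, which is harmless here but worth a word. Second, your selection of the root for $D_{\mathrm{in}}(\mathscr{D})$ by the phrase ``the inscribed (smaller, interior) one is the $+$ root'' is too quick: the other root $\alpha+\beta+\gamma-2\kappa$ can be positive and can even exceed $\kappa$ (e.g.\ curvatures $100,1,1$), in which case the corresponding Soddy circle is a small disk externally tangent to all three $D_{j}$ yet lying \emph{outside} $T(\mathscr{D})$, so ``smaller'' does not by itself identify the inscribed disk. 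You need to argue, e.g.\ in the normalized picture via Möbius invariance of the containment $D\subset T(\mathscr{D})$, or by locating the tangency points of the two Soddy circles relative to $\partial T(\mathscr{D})$, that the disk contained in $T(\mathscr{D})$ is the one with bend $\alpha+\beta+\gamma+2\kappa$. With these two points pinned down, your sketch amounts to a complete proof of the kind the paper leaves to the classical literature.
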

The following notation is standard in studying self-similar sets.
\begin{dfn}\label{dfn:word-shift}
\begin{itemize}[label=\textup{(1)},align=left,leftmargin=*,parsep=0pt,itemsep=0pt]
\item[\textup{(1)}]We set $W_{0}:=\{\emptyset\}$,
	where $\emptyset$ is an element called the \emph{empty word}, $W_{m}:=S^{m}$
	for $m\in\mathbb{N}$ and $W_{*}:=\bigcup_{m\in\mathbb{N}\cup\{0\}}W_{m}$.
	For $w\in W_{*}$, the unique $m\in\mathbb{N}\cup\{0\}$ satisfying $w\in W_{m}$
	is denoted by $|w|$ and called the \emph{length} of $w$.
\item[\textup{(2)}]Let $w,v\in W_{*}$, $w=w_{1}\ldots w_{m}$, $v=v_{1}\ldots v_{n}$.
	We define $wv\in W_{*}$ by $wv:=w_{1}\ldots w_{m}v_{1}\ldots v_{n}$
	($w\emptyset:=w$, $\emptyset v:=v$). We also define
	$w^{1}\ldots w^{k}$ for $k\geq 3$ and $w^{1},\ldots,w^{k}\in W_{*}$
	inductively by $w^{1}\ldots w^{k}:=(w^{1}\ldots w^{k-1})w^{k}$.
	For $w\in W_{*}$ and $n\in\mathbb{N}\cup\{0\}$ we set $w^{n}:=w\ldots w\in W_{n|w|}$.
	We write $w\leq v$ if and only if $w=v\tau$ for some $\tau\in W_{*}$,
	and write $w\not\asymp v$ if and only if neither $w\leq v$ nor $v\leq w$ holds.
\end{itemize}
\end{dfn}
Proposition \ref{prop:circumscribed-inscribed}-(2) enables us to define natural
``contraction maps'' $\Phi_{w}:\TDT\to\TDT$ for each $w\in W_{*}$, which in turn
is used to define the Apollonian gasket $K(\mathscr{D})$ associated with
$\mathscr{D}\in\TDT$, as follows.
\begin{dfn}\label{dfn:TDT-Phi}
We define $\Phi_{1},\Phi_{2},\Phi_{3}:\TDT\to\TDT$ by
$\Phi_{1}(\mathscr{D}):=(D_{\mathrm{in}}(\mathscr{D}),D_{2},D_{3})$,
$\Phi_{2}(\mathscr{D}):=(D_{1},D_{\mathrm{in}}(\mathscr{D}),D_{3})$ and
$\Phi_{3}(\mathscr{D}):=(D_{1},D_{2},D_{\mathrm{in}}(\mathscr{D}))$. We also set
$\Phi_{w}:=\Phi_{w_{m}}\circ\cdots\circ\Phi_{w_{1}}$ ($\Phi_{\emptyset}:=\id{\TDT}$)
and $\mathscr{D}_{w}:=\Phi_{w}(\mathscr{D})$ for $w=w_{1}\ldots w_{m}\in W_{*}$ and
$\mathscr{D}\in\TDT$.
\end{dfn}
\begin{dfn}[Apollonian gasket]\label{dfn:AG-construction}
Let $\mathscr{D}\in\TDT$. We define the \emph{Apollonian gasket}
$K(\mathscr{D})$ associated with $\mathscr{D}$ (see Figure \ref{fig:AGs}) by
\begin{equation}\label{eq:AG-construction}
K(\mathscr{D}):=\overline{T(\mathscr{D})}\setminus\bigcup\nolimits_{w\in W_{*}}D_{\mathrm{in}}(\mathscr{D}_{w})
	=\bigcap\nolimits_{m\in\mathbb{N}}\bigcup\nolimits_{w\in W_{m}}\overline{T(\mathscr{D}_{w})}.
\end{equation}
\end{dfn}
%
\begin{figure}[t]\centering
\subcaptionbox{Examples without a half-plane\label{fig:AGs-disk}}[.520\linewidth]{%
	\includegraphics[height=100pt]{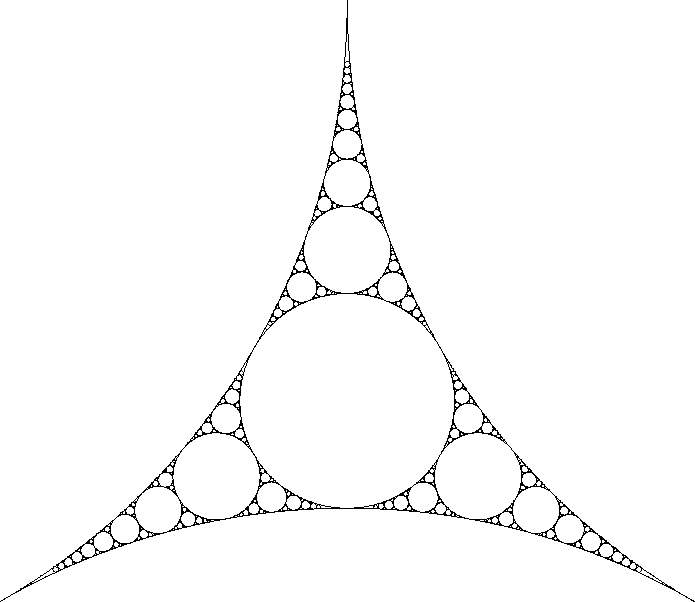}\hspace*{6pt}\includegraphics[height=100pt]{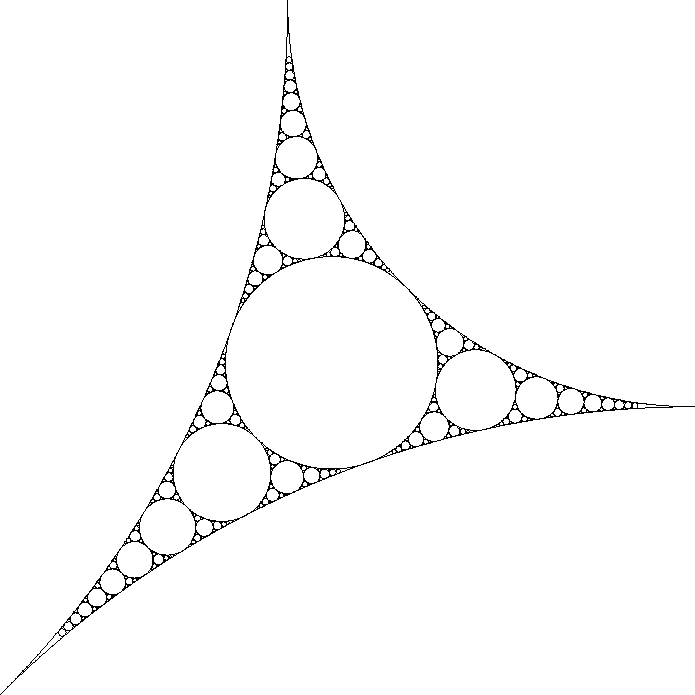}}
\subcaptionbox{Example with a half-plane\label{fig:AG-halfplane}}[.460\linewidth]{%
	\includegraphics[height=100pt]{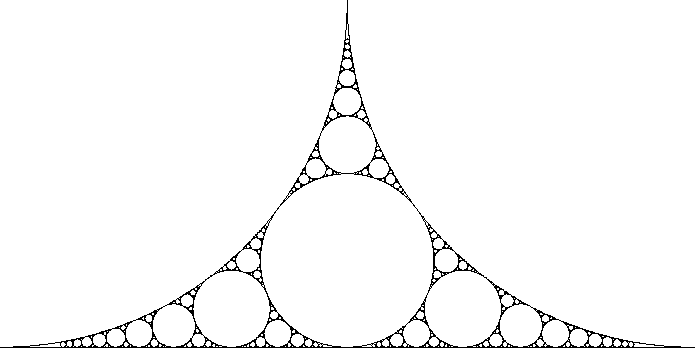}}
\caption{The Apollonian gaskets $K(\mathscr{D})$ associated with $\mathscr{D}\in\TDT$}
\label{fig:AGs}
\end{figure}
%
The curvatures of the disks involved in \eqref{eq:AG-construction}
admit the following simple expression.
\begin{dfn}\label{dfn:Mw}
We define $4\times 4$ real matrices $M_{1},M_{2},M_{3}$ by
\begin{equation}\label{eq:M1M2M3}
M_{1}:=\begin{pmatrix}1 & 0 & 0 & 0\\1 & 1 & 0 & 1\\1 & 0 & 1 & 1\\2 & 0 & 0 & 1\end{pmatrix},
	\mspace{15mu}
	M_{2}:=\begin{pmatrix}1 & 1 & 0 & 1\\0 & 1 & 0 & 0\\0 & 1 & 1 & 1\\0 & 2 & 0 & 1\end{pmatrix},
	\mspace{15mu}
	M_{3}:=\begin{pmatrix}1 & 0 & 1 & 1\\0 & 1 & 1 & 1\\0 & 0 & 1 & 0\\0 & 0 & 2 & 1\end{pmatrix}
\end{equation}
and set $M_{w}:=M_{w_{1}}\cdots M_{w_{m}}$ for each $w=w_{1}\ldots w_{m}\in W_{*}$
\textup{($M_{\emptyset}:=\id{4\times 4}$)}. Note that then for any
$n\in\mathbb{N}\cup\{0\}$ we easily obtain
\begin{gather}\label{eq:M1nM2nM3n}
M_{1^{n}}=\begin{pmatrix}1 & 0 & 0 & 0\\n^{2} & 1 & 0 & n\\n^{2} & 0 & 1 & n\\2n & 0 & 0 & 1\end{pmatrix},
	\mspace{10mu}
	M_{2^{n}}=\begin{pmatrix}1 & n^{2} & 0 & n\\0 & 1 & 0 & 0\\0 & n^{2} & 1 & n\\0 & 2n & 0 & 1\end{pmatrix},
	\mspace{10mu}
	M_{3^{n}}=\begin{pmatrix}1 & 0 & n^{2} & n\\0 & 1 & n^{2} & n\\0 & 0 & 1 & 0\\0 & 0 & 2n & 1\end{pmatrix},\\
\begin{split}
M_{2^{n}3}&=\left(\begin{smallmatrix}1\,&\,n^{2}\,&\,(n+1)^{2}\,&\,n^{2}+n+1\\[1pt]0\,&\,1\,&\,1\,&\,1\\[1pt]0\,&\,n^{2}\,&\,(n+1)^{2}\,&\,n(n+1)\\[1pt]0\,&\,2n\,&\,2(n+1)\,&\,2n+1\end{smallmatrix}\right),\quad
	M_{3^{n}2}=\left(\begin{smallmatrix}1\,&\,(n+1)^{2}\,&\,n^{2}\,&\,n^{2}+n+1\\[1pt]0\,&\,(n+1)^{2}\,&\,n^{2}\,&\,n(n+1)\\[1pt]0\,&\,1\,&\,1\,&\,1\\[1pt]0\,&\,2(n+1)\,&\,2n\,&\,2n+1\end{smallmatrix}\right),\\
M_{3^{n}1}&=\left(\begin{smallmatrix}(n+1)^{2}\,&\,0\,&\,n^{2}\,&\,n(n+1)\\[1pt](n+1)^{2}\,&\,1\,&\,n^{2}\,&\,n^{2}+n+1\\[1pt]1\,&\,0\,&\,1\,&\,1\\[1pt]2(n+1)\,&\,0\,&\,2n\,&\,2n+1\end{smallmatrix}\right),\quad
	M_{1^{n}3}=\left(\begin{smallmatrix}1\,&\,0\,&\,1\,&\,1\\[1pt]n^{2}\,&\,1\,&\,(n+1)^{2}\,&\,n^{2}+n+1\\[1pt]n^{2}\,&\,0\,&\,(n+1)^{2}\,&\,n(n+1)\\[1pt]2n\,&\,0\,&\,2(n+1)\,&\,2n+1\end{smallmatrix}\right),\\
M_{1^{n}2}&=\left(\begin{smallmatrix}1\,&\,1\,&\,0\,&\,1\\[1pt]n^{2}\,&\,(n+1)^{2}\,&\,0\,&\,n(n+1)\\[1pt]n^{2}\,&\,(n+1)^{2}\,&\,1\,&\,n^{2}+n+1\\[1pt]2n\,&\,2(n+1)\,&\,0\,&\,2n+1\end{smallmatrix}\right),\quad
	M_{2^{n}1}=\left(\begin{smallmatrix}(n+1)^{2}\,&\,n^{2}\,&\,0\,&\,n(n+1)\\[1pt]1\,&\,1\,&\,0\,&\,1\\[1pt](n+1)^{2}\,&\,n^{2}\,&\,1\,&\,n^{2}+n+1\\[1pt]2(n+1)\,&\,2n\,&\,0\,&\,2n+1\end{smallmatrix}\right).
\end{split}
\label{eq:M1n23M2n31M3n12}
\end{gather}
\end{dfn}
\begin{prop}\label{prop:curvatures-Mw}
Let $\mathscr{D}=(D_{1},D_{2},D_{3})\in\TDT$, let $\alpha,\beta,\gamma,\kappa$
be as in Proposition \textup{\ref{prop:circumscribed-inscribed}}, let $w\in W_{*}$
and set $(D_{w,1},D_{w,2},D_{w,3}):=\mathscr{D}_{w}$. Then
\begin{equation}
\bigl(\curv(D_{w,1}),\curv(D_{w,2}),\curv(D_{w,3}),\kappa(\mathscr{D}_{w})\bigr)
	=(\alpha,\beta,\gamma,\kappa)M_{w}.
\label{eq:curvatures-Mw}
\end{equation}
\end{prop}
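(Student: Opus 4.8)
The plan is to argue by induction on the length $|w|$ of the word $w\in W_{*}$, after first isolating the crucial one‑letter case. For $w=\emptyset$ both sides of \eqref{eq:curvatures-Mw} equal $(\alpha,\beta,\gamma,\kappa)$, since $\mathscr{D}_{\emptyset}=\mathscr{D}$ and $M_{\emptyset}=\id{4\times 4}$. For the inductive step, write $w=vj$ with $v\in W_{*}$ and $j\in S$; then $\mathscr{D}_{w}=\Phi_{j}(\mathscr{D}_{v})$ by Definitions \ref{dfn:word-shift}-(2) and \ref{dfn:TDT-Phi}, and $M_{w}=M_{v}M_{j}$, so the induction closes once we establish the \emph{one‑step identity}: for every $\mathscr{E}=(E_{1},E_{2},E_{3})\in\TDT$, writing $(a,b,c):=(\curv(E_{1}),\curv(E_{2}),\curv(E_{3}))$ and $e:=\kappa(\mathscr{E})=\sqrt{bc+ca+ab}$, and setting $(E'_{1},E'_{2},E'_{3}):=\Phi_{j}(\mathscr{E})$, one has $\bigl(\curv(E'_{1}),\curv(E'_{2}),\curv(E'_{3}),\kappa(\Phi_{j}(\mathscr{E}))\bigr)=(a,b,c,e)M_{j}$ for each $j\in S$; indeed, applying the inductive hypothesis to $v$ and then right‑multiplying by $M_{j}$ and invoking the one‑step identity with $\mathscr{E}=\mathscr{D}_{v}$ yields \eqref{eq:curvatures-Mw} for $w$.

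It then remains to verify the one‑step identity, and it suffices to do so for $j=1$, the cases $j=2,3$ being entirely analogous (and also following from the invariance of the construction under cyclic permutation of the three disks, which permutes $M_{1},M_{2},M_{3}$ cyclically). For $j=1$ we have $\Phi_{1}(\mathscr{E})=(E_{\mathrm{in}}(\mathscr{E}),E_{2},E_{3})$, so $\curv(E'_{2})=b$ and $\curv(E'_{3})=c$, while Proposition \ref{prop:circumscribed-inscribed}-(2) gives $\curv(E'_{1})=\curv(E_{\mathrm{in}}(\mathscr{E}))=a+b+c+2e$; these three values are precisely the inner products of $(a,b,c,e)$ with the first three columns of $M_{1}$. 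For the fourth coordinate, set $a':=\curv(E'_{1})$, $b':=\curv(E'_{2})$, $c':=\curv(E'_{3})$, so that $\kappa(\Phi_{1}(\mathscr{E}))=\sqrt{b'c'+c'a'+a'b'}$ by definition; expanding and using $e^{2}=bc+ca+ab$ gives $b'c'+c'a'+a'b'=bc+(b+c)(a+b+c+2e)=(bc+ca+ab)+(b+c)^{2}+2e(b+c)=(e+b+c)^{2}$, hence $\kappa(\Phi_{1}(\mathscr{E}))=b+c+e$ since $e,b,c\geq 0$, which is exactly the inner product of $(a,b,c,e)$ with the last column $(0,1,1,1)^{\mathsf{T}}$ of $M_{1}$. (The same computation is valid verbatim when one of $D_{1},D_{2},D_{3}$ is a half‑plane, the corresponding curvature then being $0$; note $E_{\mathrm{in}}(\mathscr{E})$ is always a disk.) This proves the one‑step identity, and with it the proposition.

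The only step with any mathematical content is the displayed computation of $b'c'+c'a'+a'b'$: although the transformation of curvature triples induced by each $\Phi_{j}$ is quadratic, it becomes \emph{linear} once the auxiliary quantity $\kappa=\sqrt{\beta\gamma+\gamma\alpha+\alpha\beta}$ is adjoined as a fourth coordinate, and this linearization is the substance of the proposition and the reason the matrices $M_{1},M_{2},M_{3}$ exist at all. Everything else is bookkeeping: one must keep in mind that $\Phi_{w}=\Phi_{w_{m}}\circ\cdots\circ\Phi_{w_{1}}$ and $M_{w}=M_{w_{1}}\cdots M_{w_{m}}$ are both read off the letters of $w$ in the same left‑to‑right order, so that composition of the maps matches multiplication of the matrices in the order used in \eqref{eq:curvatures-Mw}. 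I would not expect a genuine obstacle here; the explicit matrices in \eqref{eq:M1nM2nM3n}--\eqref{eq:M1n23M2n31M3n12} play no role in the proof and are recorded only for later use.
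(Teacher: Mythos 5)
Your proof is correct and follows essentially the same route as the paper, which simply invokes induction on $|w|$ together with Proposition \ref{prop:circumscribed-inscribed}-(2) and Definition \ref{dfn:TDT-Phi}; you have merely written out the one-step computation $b'c'+c'a'+a'b'=(e+b+c)^{2}$ and the ordering bookkeeping that the paper leaves implicit.
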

\begin{proof}
This follows by induction in $|w|$ using
Proposition \ref{prop:circumscribed-inscribed}-(2) and Definition \ref{dfn:TDT-Phi}.
\end{proof}
We next collect basic facts regarding the Hausdorff dimension and measure of $K(\mathscr{D})$.
For each $s\in(0,+\infty)$ let $\mathscr{H}^{s}:2^{\mathbb{C}}\to[0,+\infty]$ denote
the $s$-dimensional Hausdorff (outer) measure on $\mathbb{C}$ with respect to the
Euclidean metric, and for each $A\subset\mathbb{C}$ let
$\dim_{\mathrm{H}}A:=\sup\{s\in(0,+\infty)\mid\mathscr{H}^{s}(A)=+\infty\}
	=\inf\{s\in(0,+\infty)\mid\mathscr{H}^{s}(A)=0\}$
denote its Hausdorff dimension. As is well known, it easily follows from the definition
of $\mathscr{H}^{s}$ that the image $f(A)$ of $A\subset\mathbb{C}$ by a Lipschitz
continuous map $f:A\to\mathbb{C}$ with Lipschitz constant $C\in[0,+\infty)$ satisfies
$\mathscr{H}^{s}(f(A))\leq C^{s}\mathscr{H}^{s}(A)$ for any $s\in(0,+\infty)$.
On the basis of this observation, we easily get the following lemma.
\begin{lem}\label{lem:AG-Hausdorff-dim}
Let $\mathscr{D},\mathscr{D}'\in\TDT$. Then there exists $c\in(0,+\infty)$ such that
$\mathscr{H}^{s}(K(\mathscr{D}))\leq c^{s}\mathscr{H}^{s}(K(\mathscr{D}'))$
for any $s\in(0,+\infty)$. In particular,
$\dim_{\mathrm{H}}K(\mathscr{D})=\dim_{\mathrm{H}}K(\mathscr{D}')$.
\end{lem}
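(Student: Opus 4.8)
The plan is to reduce everything to a single bi-Lipschitz comparison between the two Apollonian gaskets and then invoke the stated scaling property of Hausdorff measure under Lipschitz maps. The key point is that $K(\mathscr{D})$ and $K(\mathscr{D}')$ are built by the \emph{same} combinatorial recipe $\{\Phi_{w}\}_{w\in W_{*}}$ applied to the two initial triples, so there is a natural bijection between the two gaskets respecting the cell structure, and I would like to show this bijection is Lipschitz (in fact bi-Lipschitz) with some constant $c$ depending only on $\mathscr{D},\mathscr{D}'$.

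First I would treat the non-degenerate case where all six disks $D_{1},D_{2},D_{3}$ and $D'_{1},D'_{2},D'_{3}$ are genuine disks; the half-plane cases can be brought into this range by applying a single auxiliary M\"{o}bius transformation of $\widehat{\mathbb{C}}$ (which is locally bi-Lipschitz away from its pole, and one can arrange the pole to avoid the compact sets in question), at the cost of only changing the final constant $c$. So assume $\mathscr{D},\mathscr{D}'\in\TDTo$. There is a unique M\"{o}bius transformation $g$ of $\widehat{\mathbb{C}}$ sending the three tangency points $q_{j}(\mathscr{D})$ to $q_{j}(\mathscr{D}')$, $j\in S$; by uniqueness of disks through three points and the tangency/orientation constraints, $g$ carries $\mathscr{D}$ to $\mathscr{D}'$, hence $T(\mathscr{D})$ to $T(\mathscr{D}')$, and it intertwines the contraction maps, $g\circ\Phi_{w}=\Phi_{w}\circ g$ on the relevant triples, so $g(K(\mathscr{D}))=K(\mathscr{D}')$. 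Since $\overline{T(\mathscr{D})}$ is a compact subset of $\mathbb{C}$ on which $g$ has no pole, $g$ restricted to $\overline{T(\mathscr{D})}\supset K(\mathscr{D})$ is Lipschitz with some constant $c\in(0,+\infty)$ (its derivative is continuous hence bounded on the compact set). Applying the recalled estimate $\mathscr{H}^{s}(f(A))\le C^{s}\mathscr{H}^{s}(A)$ with $f=g|_{K(\mathscr{D})}$ and $C=c$ gives $\mathscr{H}^{s}(K(\mathscr{D}'))\le c^{s}\mathscr{H}^{s}(K(\mathscr{D}))$; running the same argument with $g^{-1}$ (also a M\"{o}bius transformation, Lipschitz on the compact set $\overline{T(\mathscr{D}')}$) yields the reverse inequality with a constant $c'$, and replacing $c$ by $\max\{c,c'\}$ establishes the displayed inequality in both directions. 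The dimension equality is then immediate from the definition of $\dim_{\mathrm{H}}$ as the threshold where $\mathscr{H}^{s}$ jumps from $+\infty$ to $0$: the two-sided comparison forces the thresholds to coincide.

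The main obstacle I anticipate is the bookkeeping around the existence and M\"{o}bius-invariance claims — verifying that the three-tangency-point map $g$ genuinely sends the full tangential disk triple (with its orientation) to the other one and commutes with all the $\Phi_{w}$, and handling the half-plane cases uniformly. One can sidestep most of this: rather than identifying $g$ precisely, it suffices to produce \emph{any} bijection $K(\mathscr{D})\to K(\mathscr{D}')$ that is Lipschitz with a controlled constant, e.g.\ by matching up the level-$m$ cells $\overline{T(\mathscr{D}_{w})}$ with $\overline{T(\mathscr{D}'_{w})}$ and observing, via Proposition~\ref{prop:curvatures-Mw}, that their diameters are comparable up to a factor independent of $w$ (the curvature vectors are obtained by multiplying the same matrix $M_{w}$ by two fixed positive vectors, and the ratios stay in a fixed compact range); then a standard cell-wise gluing argument gives the Lipschitz bound directly. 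Either route reduces the lemma to the already-recalled behaviour of $\mathscr{H}^{s}$ under Lipschitz maps, which is the substantive input, so the remaining work is genuinely elementary.
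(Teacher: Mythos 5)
Your proposal is correct and follows essentially the same route as the paper: take the (unique, orientation-preserving) M\"{o}bius transformation matching the three tangency points $q_{j}(\mathscr{D})$ and $q_{j}(\mathscr{D}')$, note that it carries one gasket onto the other since M\"{o}bius maps preserve disks and tangency, and then apply the recalled bound $\mathscr{H}^{s}(f(A))\leq C^{s}\mathscr{H}^{s}(A)$ using its Lipschitz continuity on a compact set containing the gasket (the paper uses $\overline{D_{\mathrm{cir}}(\mathscr{D}')}$). Your extra reductions (eliminating half-planes first, and the fallback cell-wise gluing) are unnecessary but harmless, since the pole of the transformation automatically avoids the relevant compact set because the image region is bounded.
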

\begin{proof}
Let $f_{\mathscr{D}',\mathscr{D}}$ denote the unique orientation-preserving M\"{o}bius
transformation on the Riemann sphere $\widehat{\mathbb{C}}:=\mathbb{C}\cup\{\infty\}$
such that $f_{\mathscr{D}',\mathscr{D}}(q_{j}(\mathscr{D}'))=q_{j}(\mathscr{D})$ for any
$j\in S$. Then $f_{\mathscr{D}',\mathscr{D}}(K(\mathscr{D}'))=K(\mathscr{D})$,
since M\"{o}bius transformations map any open disk in $\widehat{\mathbb{C}}$
onto another. Now the assertion follows from the Lipschitz continuity of
$f_{\mathscr{D}',\mathscr{D}}|_{\overline{D_{\mathrm{cir}}(\mathscr{D}')}}$.
\end{proof}
\begin{dfn}\label{dfn:AG-Hausdorff-dim}
Noting Lemma \ref{lem:AG-Hausdorff-dim} and choosing $\mathscr{D}\in\TDT$
arbitrarily, we define
\begin{equation}\label{eq:AG-Hausdorff-dim}
d:=\dim_{\mathrm{H}}K(\mathscr{D}).
\end{equation}
\end{dfn}
\begin{thm}[Boyd \cite{Boyd:Mathematika1973}; see also \cite{Hirst:JLMS1967,MauldinUrbanski:AdvMath1998,McMullen:AmerJMath1998}]\label{thm:AG-Hausdorff-dim-bounds}
$1<d<2$. In fact,
\begin{equation}\label{eq:AG-Hausdorff-dim-bounds}
1.300197<d<1.314534.
\end{equation}
\end{thm}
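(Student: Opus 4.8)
The goal is to bound $d=\dim_{\mathrm{H}}K(\mathscr{D})$, and by Lemma \ref{lem:AG-Hausdorff-dim} we may fix any convenient $\mathscr{D}\in\TDT$, say one whose ideal triangle $T(\mathscr{D})$ and circumscribed disk $D_{\mathrm{cir}}(\mathscr{D})$ are easy to work with. The natural framework is that the family $\{\overline{T(\mathscr{D}_w)}\}_{w\in W_m}$ gives, for each $m$, a cover of $K(\mathscr{D})$ by sets whose diameters are controlled by the curvatures $\curv(D_{\mathrm{cir}}(\mathscr{D}_w))=\kappa(\mathscr{D}_w)$ appearing as the fourth coordinate of $(\alpha,\beta,\gamma,\kappa)M_w$ in Proposition \ref{prop:curvatures-Mw}. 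The upper bound on $d$ then comes from exhibiting, for some $s$, a value $\sum_{w\in W_m}\kappa(\mathscr{D}_w)^{-s}$ that stays bounded (so $\mathscr{H}^s(K(\mathscr{D}))<\infty$ and $d\le s$); the lower bound comes from a mass-distribution / Frostman argument, putting a measure $\mu$ on $K(\mathscr{D})$ with $\mu(\overline{T(\mathscr{D}_w)})$ comparable to $\kappa(\mathscr{D}_w)^{-s}$ and checking $\mu(B(x,r))\lesssim r^s$.

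First I would record the geometric comparisons: $\diam\overline{T(\mathscr{D}_w)}$ is comparable (with constants depending only on $\mathscr{D}$) to $\kappa(\mathscr{D}_w)^{-1}$, using Proposition \ref{prop:circumscribed-inscribed}-(1), which says $\overline{T(\mathscr{D}_w)}\subset D_{\mathrm{cir}}(\mathscr{D}_w)$ and $\curv(D_{\mathrm{cir}}(\mathscr{D}_w))=\kappa(\mathscr{D}_w)$; the lower comparison uses that the three tangency points $q_j(\mathscr{D}_w)$ lie on $\partial D_{\mathrm{cir}}(\mathscr{D}_w)$ and are not too close together, which one controls via the bounded distortion of the contractions $\Phi_w$. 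Next I would set up the ``pressure'' bookkeeping: let $Z_m(s):=\sum_{w\in W_m}\kappa(\mathscr{D}_w)^{-s}$ and observe, via \eqref{eq:curvatures-Mw} and the cocycle property $M_{wv}=M_w M_v$, that $Z_{m}(s)$ is sub/supermultiplicative up to constants, so that $P(s):=\lim_{m\to\infty}\frac1m\log Z_m(s)$ exists and is strictly decreasing and continuous in $s$; the critical exponent $s_0$ with $P(s_0)=0$ satisfies $d=s_0$ by the standard covering (upper) and Frostman (lower) arguments sketched above. For the Frostman lower bound one needs the bounded-overlap / separation property that a ball $B(x,r)$ meets only boundedly many $\overline{T(\mathscr{D}_w)}$ of comparable generation, which again follows from the orthogonality statement in Proposition \ref{prop:circumscribed-inscribed}-(1) together with uniform bounds on the ratio $\kappa(\mathscr{D}_{wi})/\kappa(\mathscr{D}_w)$ read off from the columns of $M_1,M_2,M_3$.

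The quantitative bounds \eqref{eq:AG-Hausdorff-dim-bounds} then reduce to estimating $s_0$, i.e.\ to showing $P(1.300197)>0>P(1.314534)$. This is where the explicit matrices in Definition \ref{dfn:Mw} earn their keep: one replaces the infinite-dimensional transfer operator $f\mapsto\sum_{i\in S}\kappa(\mathscr{D}_i)^{-s}\,f\circ\Phi_i$ by a finite computation. Concretely, following Boyd and the refinements of McMullen and Mauldin--Urbański, one brackets $P(s)$ between $\frac1m\log\min_{\mathscr{D}}Z_m(s)$-type and $\frac1m\log\max_{\mathscr{D}}Z_m(s)$-type quantities, using the sub/supermultiplicativity, and evaluates these for a moderately large but fixed $m$ by matrix multiplication of the $M_w$, $|w|=m$ — the entries being polynomials in the generation index as displayed in \eqref{eq:M1nM2nM3n}--\eqref{eq:M1n23M2n31M3n12}, which keeps the arithmetic exact. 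I expect the main obstacle to be precisely this last step: getting rigorous, certified numerical bounds tight enough to separate $1.300197$ from $1.314534$ requires either a careful interval-arithmetic implementation or the combinatorial device (due to Boyd) of comparing $Z_m$ across all of $\TDT$ simultaneously, and controlling the error from truncating at finite $m$. Everything before that — existence of $d$, the identification $d=s_0$, the two-sided Frostman estimates — is routine self-conformal-set theory once the bounded-distortion and bounded-overlap lemmas are in place; so in the survey I would cite \cite{Boyd:Mathematika1973,Hirst:JLMS1967,MauldinUrbanski:AdvMath1998,McMullen:AmerJMath1998} for the sharp numerics and only sketch the structural part.
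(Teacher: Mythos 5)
Two separate issues. First, the bounds $1.300197<d<1.314534$ in \eqref{eq:AG-Hausdorff-dim-bounds} are the entire substance of the theorem, and your proposal explicitly defers them to \cite{Boyd:Mathematika1973,McMullen:AmerJMath1998,MauldinUrbanski:AdvMath1998}; since the survey itself states this result as a quoted theorem with no proof, citing is acceptable here, but then what you have written is an outline of a citation rather than a proof, and only the ``structural part'' is yours to defend. Second, that structural part has a genuine gap: you treat $\{\Phi_{1},\Phi_{2},\Phi_{3}\}$ as a uniformly contracting system with bounded distortion over the finite alphabet $S$, and in particular you claim that $Z_{m}(s)=\sum_{w\in W_{m}}\kappa(\mathscr{D}_{w})^{-s}$ is sub/supermultiplicative up to constants via $M_{wv}=M_{w}M_{v}$. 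This fails because the system is parabolic. From \eqref{eq:M1nM2nM3n} the fourth coordinate of $(\alpha,\beta,\gamma,\kappa)M_{1^{n}}$ is $n(\beta+\gamma)+\kappa$, so $\kappa(\mathscr{D}_{1^{n}})$ grows only linearly in $n$; taking $w=v=1^{n}$ gives $\kappa(\mathscr{D}_{wv})\asymp n$ while $\kappa(\mathscr{D}_{w})\kappa(\mathscr{D}_{v})\asymp n^{2}$, so no constant $c>0$ yields $\kappa(\mathscr{D}_{wv})\geq c\,\kappa(\mathscr{D}_{w})\kappa(\mathscr{D}_{v})$ uniformly, and taking $w=2^{n}$, $v=1^{m}$ shows the reverse comparison $\kappa(\mathscr{D}_{wv})\leq C\,\kappa(\mathscr{D}_{w})\kappa(\mathscr{D}_{v})$ fails as well. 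Hence neither the existence of your pressure $P(s)$ by Fekete-type arguments, nor the uniform two-sided comparison $\mu(\overline{T(\mathscr{D}_{w})})\asymp\kappa(\mathscr{D}_{w})^{-s_{0}}$ needed for your Frostman measure, is ``routine self-conformal-set theory'' in this setting; indeed the existence of such a measure at the critical exponent is essentially the statement $0<\mathscr{H}^{d}(K(\mathscr{D}))<+\infty$, which is the nontrivial Theorem \ref{thm:AG-Hausdorff-meas} of Sullivan and Mauldin--Urba\'{n}ski, not a by-product of your argument.

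The lack of uniform contraction along the words $j^{n}$ is exactly why Mauldin and Urba\'{n}ski --- and the present paper, cf.\ Definition \ref{dfn:index-I}, Proposition \ref{prop:AG-HM-I-comparable-norm} and Corollary \ref{cor:AG-HM-Lipschitz-I} --- pass to the infinite alphabet $I=\{j^{n}k\mid j\neq k,\ n\in\mathbb{N}\}$, for which uniform contraction and bounded-distortion estimates do hold. A corrected version of your sketch should therefore either be formulated as an infinite conformal iterated function system over $I$ (where Bowen's formula and the finiteness/positivity of $\mathscr{H}^{d}$ are available from \cite{MauldinUrbanski:AdvMath1998}), or follow Boyd's original route via the exponent of convergence of the curvature series of all inscribed disks; in either case the certified numerics separating $1.300197$ from $1.314534$ still have to be imported from the cited works.
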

Moreover, for the $d$-dimensional Hausdorff measure 
$\mathscr{H}^{d}(K(\mathscr{D}))$ of $K(\mathscr{D})$
we have the following theorem, which was proved first by Sullivan
\cite{Sullivan:Acta1984} through considerations on the isometric action of
M\"{o}bius transformations on the three-dimensional hyperbolic space, and 
later by Mauldin and Urba\'{n}ski \cite{MauldinUrbanski:AdvMath1998}
through purely two-dimensional arguments.
\begin{thm}[{Sullivan \cite[Theorem 2]{Sullivan:Acta1984}, Mauldin and Urba\'{n}ski \cite[Theorem 2.6]{MauldinUrbanski:AdvMath1998}}]\label{thm:AG-Hausdorff-meas}
For any $\mathscr{D}\in\TDT$,
\begin{equation}\label{eq:AG-Hausdorff-meas}
0<\mathscr{H}^{d}(K(\mathscr{D}))<+\infty.
\end{equation}
\end{thm}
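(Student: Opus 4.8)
The plan is to realize $K(\mathscr{D})$ as the limit set of a conformal iterated function system of M\"{o}bius transformations with \emph{parabolic} fixed points, and then to invoke the analysis of such (``parabolic'') systems --- and of the Apollonian gasket in particular --- carried out by Mauldin and Urba\'{n}ski in \cite{MauldinUrbanski:AdvMath1998}; the alternative, and historically first, route of Sullivan \cite{Sullivan:Acta1984} regards $K(\mathscr{D})$ as the limit set of a geometrically finite Kleinian group acting on the three-dimensional hyperbolic space and appeals to his results on Patterson--Sullivan measures, the decisive structural inputs there being that the cusps have rank $1$ and that $\delta=d>1/2$. \emph{Step 1 (the conformal IFS).} For $j\in S$ let $\psi_{j}:=f_{\mathscr{D},\mathscr{D}_{j}}$ be the orientation-preserving M\"{o}bius transformation with $\psi_{j}(q_{k}(\mathscr{D}))=q_{k}(\mathscr{D}_{j})$ for all $k\in S$, as in the proof of Lemma \ref{lem:AG-Hausdorff-dim}, and set $\psi_{w}:=\psi_{w_{1}}\circ\cdots\circ\psi_{w_{m}}$ for $w=w_{1}\ldots w_{m}\in W_{*}$. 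One checks that $\psi_{j}$ maps the ordered disk triple $\mathscr{D}$ onto $\mathscr{D}_{j}$ --- by conformality and Proposition \ref{prop:circumscribed-inscribed}-(1), the images of $\partial D_{1},\partial D_{2},\partial D_{3}$ must be the circles through the appropriate pairs of vertices orthogonal to $\partial D_{\mathrm{cir}}(\mathscr{D}_{j})$ --- so that $\psi_{j}$ conjugates the whole construction; hence $\psi_{w}(\overline{T(\mathscr{D}_{v})})=\overline{T(\mathscr{D}_{wv})}$ for $v\in W_{*}$ and, by \eqref{eq:AG-construction}, $\psi_{w}(K(\mathscr{D}))=K(\mathscr{D})\cap\overline{T(\mathscr{D}_{w})}$, so $K(\mathscr{D})=\bigcup_{w\in W_{m}}\psi_{w}(K(\mathscr{D}))$ for every $m\in\mathbb{N}$. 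Moreover $\psi_{j}(\overline{D_{\mathrm{cir}}(\mathscr{D})})=\overline{D_{\mathrm{cir}}(\mathscr{D}_{j})}\subset\overline{D_{\mathrm{cir}}(\mathscr{D})}$ (internal tangency at $q_{j}(\mathscr{D})$, as $\curv(D_{\mathrm{cir}}(\mathscr{D}_{j}))=\kappa(\mathscr{D}_{j})>\kappa(\mathscr{D})$) and $T(\mathscr{D}_{1}),T(\mathscr{D}_{2}),T(\mathscr{D}_{3})$ are pairwise disjoint subsets of $T(\mathscr{D})$; thus $\{\psi_{j}\}_{j\in S}$ is a conformal IFS on $\overline{D_{\mathrm{cir}}(\mathscr{D})}$ with attractor $K(\mathscr{D})$ and with the open set condition holding for $T(\mathscr{D})$.

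\emph{Step 2 (parabolicity and acceleration).} Because $q_{j}(\mathscr{D}_{j})=q_{j}(\mathscr{D})$ --- the definition of $q_{j}$ involves only the two members of $\mathscr{D}$ other than $D_{j}$, which $\Phi_{j}$ leaves unchanged --- each $\psi_{j}$ fixes $q_{j}(\mathscr{D})$, and since it preserves the germs there of the two mutually tangent circular arcs bounding $T(\mathscr{D})$, it must be \emph{parabolic} at $q_{j}(\mathscr{D})$. So the system is not uniformly contracting: $|(\psi_{j^{n}})'|\asymp n^{-2}$ away from $q_{j}(\mathscr{D})$, and $\diam\overline{T(\mathscr{D}_{j^{n}})}\asymp\kappa(\mathscr{D}_{j^{n}})^{-1}\asymp n^{-1}$ by Proposition \ref{prop:curvatures-Mw} and \eqref{eq:M1nM2nM3n}. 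Following Mauldin--Urba\'{n}ski, I would pass to the \emph{accelerated} system whose countably many generators are the maps $\psi_{j^{n}k}$ for $j\in S$, $k\in S\setminus\{j\}$, $n\in\mathbb{N}$, recording the first exit from the cusp at $q_{j}(\mathscr{D})$: this is an \emph{infinite} but now \emph{uniformly hyperbolic} conformal IFS, still with the open set $T(\mathscr{D})$, whose limit set is $K(\mathscr{D})$ with the countable set of all tangency points deleted; Proposition \ref{prop:curvatures-Mw} and \eqref{eq:M1n23M2n31M3n12} give $\diam\psi_{j^{n}k}(K(\mathscr{D}))\asymp\kappa(\mathscr{D}_{j^{n}k})^{-1}\asymp n^{-2}$ uniformly, so that $\sum_{j\in S}\sum_{k\in S\setminus\{j\}}\sum_{n\in\mathbb{N}}\bigl(\diam\psi_{j^{n}k}(K(\mathscr{D}))\bigr)^{s}$ converges if and only if $s>1/2$.

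\emph{Step 3 (conclusion).} By the Mauldin--Urba\'{n}ski theory of conformal IFS, the Hausdorff dimension of the limit set of the accelerated system equals its critical exponent (the zero of its topological pressure), which one identifies with $d$ via the classical description of $d$ as the exponent of convergence of $\sum_{w\in W_{*}}\kappa(\mathscr{D}_{w})^{-s}$ (Hirst, Boyd; cf.\ Theorem \ref{thm:AG-Hausdorff-dim-bounds} and Proposition \ref{prop:curvatures-Mw}). To obtain $\mathscr{H}^{d}(K(\mathscr{D}))<+\infty$, I would cover $K(\mathscr{D})$ by the cylinders $\psi_{\omega}(K(\mathscr{D}))$ of a fixed generation of the accelerated system and combine bounded distortion with the boundedness, at the critical exponent, of $\sum_{|\omega|=m}\bigl(\diam\psi_{\omega}(K(\mathscr{D}))\bigr)^{d}$ --- the essential point being that, near each cusp, the corresponding part of this sum is comparable to $\sum_{n}n^{-2d}$, which converges precisely because $2d>1$ by Theorem \ref{thm:AG-Hausdorff-dim-bounds}. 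To obtain $\mathscr{H}^{d}(K(\mathscr{D}))>0$, I would construct the $d$-conformal measure $\mu$ on $K(\mathscr{D})$ (the Patterson--Sullivan measure), namely the eigenmeasure with eigenvalue $1$ of the dual of the Ruelle operator $g\mapsto\sum_{j\in S}\sum_{k\in S\setminus\{j\}}\sum_{n\in\mathbb{N}}|\psi_{j^{n}k}'|^{d}\,(g\circ\psi_{j^{n}k})$ of the accelerated system --- a bounded operator precisely because $d>1/2$ --- which satisfies $\mu(\psi_{\omega}(A))=\int_{A}|\psi_{\omega}'|^{d}\,d\mu$; a shadow-lemma-type analysis of $\mu$ (bounded distortion at generic scales, direct estimation near the cusps, where $\mu(B(x,r))\asymp r^{2d}$) yields the uniform upper Frostman bound $\mu(B(x,r))\leq Cr^{d}$, whence $\mathscr{H}^{d}(K(\mathscr{D}))\geq C^{-1}\mu(K(\mathscr{D}))>0$ by the mass distribution principle.

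\emph{Main obstacle.} The crux throughout is the non-uniform hyperbolicity produced by the parabolic fixed points at the tangency points: it precludes a direct finite self-conformal argument and forces the passage to the accelerated infinite system; it makes the conformal measure $\mu$ \emph{fail} the lower Frostman estimate at the tangency points, so that $\mathscr{H}^{d}(K(\mathscr{D}))<+\infty$ cannot be read off from $\mu$ and must instead come from an explicit cover; and the convergence of every cusp-related sum appearing in the argument rests on the single quantitative input $2d>1$ furnished by Boyd's bounds.
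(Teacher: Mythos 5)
The paper gives no proof of this theorem---it is imported directly from Sullivan \cite{Sullivan:Acta1984} and Mauldin--Urba\'{n}ski \cite{MauldinUrbanski:AdvMath1998}---and your sketch is essentially a reconstruction of the cited Mauldin--Urba\'{n}ski argument (realize $K(\mathscr{D})$ as the attractor of a parabolic conformal IFS, accelerate to the uniformly contracting infinite system indexed by the words $j^{n}k$, i.e.\ by $I$, which is precisely the device the paper credits to them around Definition \ref{dfn:index-I}, then get finiteness from a cylinder cover at the critical exponent and positivity from the $d$-conformal measure and the mass distribution principle), so you are taking the same route the paper relies on. The only slip is the cusp scaling $\mu(B(x,r))\asymp r^{2d}$: for a rank-one parabolic (tangency) point the correct exponent is $2d-1$; since $d>1$ this still exceeds $d$, so the upper Frostman bound $\mu(B(x,r))\leq Cr^{d}$ and your conclusions are unaffected.
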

Note that for each
$\mathscr{D}=(D_{1},D_{2},D_{3}),\mathscr{D}'=(D'_{1},D'_{2},D'_{3})\in\TDT$,
the M\"{o}bius transformation $f_{\mathscr{D}',\mathscr{D}}$ as in the proof of
Lemma \ref{lem:AG-Hausdorff-dim} is a Euclidean isometry if and only if
$\curv(D_{j})=\curv(D'_{j})$ for any $j\in S$, and in particular that the value of
$\mathscr{H}^{d}(K(\mathscr{D}))$ for $\mathscr{D}=(D_{1},D_{2},D_{3})\in\TDT$
is determined solely by $(\curv(D_{j}))_{j\in S}$. Thus we can use the triple
$(\curv(D_{j}))_{j\in S}$ as a parameter to represent the particular Euclidean
geometry of \emph{each} Apollonian gasket $K(\mathscr{D})$, and the following
definition introduces the space of such parameters, regarded as that of all
possible Euclidean geometries of $K(\mathscr{D})$, $\mathscr{D}\in\TDT$.
\begin{dfn}\label{dfn:AG-geometry-parametrize}
\begin{itemize}[label=\textup{(0)},align=left,leftmargin=*,parsep=0pt,itemsep=0pt]
\item[\textup{(0)}]We set
	$\kappa(g):=\sqrt{\beta\gamma+\gamma\alpha+\alpha\beta}$
	for $g=(\alpha,\beta,\gamma)\in[0,+\infty)^{3}$.
\item[\textup{(1)}]We define $\Gamma\subset[0,+\infty)^{4}$ by
	$\Gamma:=\bigl\{(g,\kappa(g))\bigm|\textrm{$g\in[0,+\infty)^{3}$, $\kappa(g)>0$}\bigr\}$
	and equip $\Gamma$ with the metric $\rho_{\Gamma}:\Gamma\times\Gamma\to[0,+\infty)$ given by
	$\rho_{\Gamma}\bigl((g_{1},\kappa(g_{1})),(g_{2},\kappa(g_{2}))\bigr):=|g_{1}-g_{2}|$.
	We also set $\Gamma^{\circ}:=\Gamma\cap(0,+\infty)^{4}$, which is an open subset of $\Gamma$.
\item[\textup{(2)}]For each $\varepsilon\in(0,1)$, we define a compact subset
	$\Gamma_{\varepsilon}$ of $(\Gamma,\rho_{\Gamma})$ by
	\begin{equation}\label{eq:AG-geometry-parametrize-Gamma-epsilon}
	\Gamma_{\varepsilon}:=\bigl\{(g,\kappa(g))\bigm|
		\textrm{$g=(\alpha,\beta,\gamma)\in[0,\varepsilon^{-1}]^{3}$, $\min\{\beta+\gamma,\gamma+\alpha,\alpha+\beta\}\geq\varepsilon$}\bigr\}.
	\end{equation}
\end{itemize}
\end{dfn}
In Definition \ref{dfn:AG-geometry-parametrize}, the coordinate $\kappa(g)$ is attached
for the sake of the simplicity in applying Proposition \ref{prop:curvatures-Mw}.
It is of course redundant for parametrizing the geometries of $K(\mathscr{D})$
and each $(g,\kappa(g))\in\Gamma$ should (and will) be identified with $g$, which is
why the metric $\rho_{\Gamma}$ has been defined as above. Clearly
$\bigl(\curv(D_{1}),\curv(D_{2}),\curv(D_{3}),\kappa(\mathscr{D})\bigr)\in\Gamma$
for any $\mathscr{D}=(D_{1},D_{2},D_{3})\in\TDT$, and conversely it is not difficult to see that
any $(g,\kappa(g))\in\Gamma$ is of this form for some $\mathscr{D}=(D_{1},D_{2},D_{3})\in\TDT$.
Thus the value of the Hausdorff measure $\mathscr{H}^{d}(K(\mathscr{D}))$
of $K(\mathscr{D})$ for $\mathscr{D}\in\TDT$ induces a function
$\HM:\Gamma\to(0,+\infty)$ on the parameter space $\Gamma$ defined as follows.
\begin{dfn}\label{dfn:AG-HM}
Recalling Theorem \textup{\ref{thm:AG-Hausdorff-meas}}, we define $\HM:\Gamma\to(0,+\infty)$ by
\begin{equation}\label{eq:AG-HM}
\HM(g):=\HM(\alpha,\beta,\gamma):=\mathscr{H}^{d}(K(\mathscr{D}))
	\qquad\textrm{for each $g=(\alpha,\beta,\gamma,\kappa)\in\Gamma$,}
\end{equation}
where we take any $\mathscr{D}=(D_{1},D_{2},D_{3})\in\TDT$ with
$\bigl(\curv(D_{1}),\curv(D_{2}),\curv(D_{3})\bigr)=(\alpha,\beta,\gamma)$.
Note that it is easy to see that for any $(\alpha,\beta,\gamma,\kappa)\in\Gamma$
and any $s\in(0,+\infty)$,
\begin{equation}\label{eq:AG-HM-invariant}
\HM(\alpha,\beta,\gamma)
	=\HM(\beta,\gamma,\alpha)
	=\HM(\alpha,\gamma,\beta)
	=s^{d}\HM(s\alpha,s\beta,s\gamma).
\end{equation}
\end{dfn}
For our purpose we will need some uniform continuity of $\HM$. In fact,
we can prove the following theorem by showing that the M\"{o}bius transformation
$f_{\mathscr{D}',\mathscr{D}}$ as in the proof of Lemma \ref{lem:AG-Hausdorff-dim} is
close to a Euclidean isometry uniformly on $\overline{D_{\mathrm{cir}}(\mathscr{D}')}$
if $\mathscr{D},\mathscr{D}'\in\TDT$ are close in the parameter space $\Gamma$.
Recall Proposition \ref{prop:curvatures-Mw}, which yields
$gM_{w}\in\Gamma$ and $\HM(gM_{w})=\mathscr{H}^{d}(K_{w}(\mathscr{D}))$
for $g,\mathscr{D}$ as in \eqref{eq:AG-HM} and any $w\in W_{*}$.
\begin{thm}\label{thm:AG-HM-Lipschitz}
There exist $c_{1},c_{2}\in(0,+\infty)$ such that for each $\varepsilon\in(0,1)$,
\begin{align}\label{eq:AG-HM-Lipschitz}
\bigl|\HM(g_{1}M_{w})-\HM(g_{2}M_{w})\bigr|
	&\leq c_{1,\varepsilon}\HM(g_{1}M_{w})\rho_{\Gamma}(g_{1},g_{2})\\
\textrm{and}\qquad\HM(g_{2}M_{w})&\leq c_{2,\varepsilon}\HM(g_{1}M_{w})
\label{eq:AG-HM-comparable}
\end{align}
for any $g_{1},g_{2}\in\Gamma_{\varepsilon}$ and any $w\in W_{*}$,
where $c_{1,\varepsilon}:=c_{1}\varepsilon^{1-30d}$ and
$c_{2,\varepsilon}:=c_{2}\varepsilon^{-15d}$.
\end{thm}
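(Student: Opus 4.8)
The plan is to reduce everything to a single quantitative statement: the Möbius transformation $f_{\mathscr{D}',\mathscr{D}}$ from the proof of Lemma~\ref{lem:AG-Hausdorff-dim}, which carries $K(\mathscr{D}')$ onto $K(\mathscr{D})$, is uniformly close on $\overline{D_{\mathrm{cir}}(\mathscr{D}')}$ to a Euclidean similarity when $\mathscr{D},\mathscr{D}'\in\TDT$ have curvature data lying in a fixed $\Gamma_{\varepsilon}$, with the closeness controlled linearly by $\rho_{\Gamma}$. First I would normalize: using \eqref{eq:AG-HM-invariant}, rescale so that $\kappa(\mathscr{D}')=1$ (say), which places the three tangency points $q_{j}(\mathscr{D}')$ on the unit-curvature circumscribed circle $\partial D_{\mathrm{cir}}(\mathscr{D}')$; for $g\in\Gamma_{\varepsilon}$ the curvatures $(\alpha,\beta,\gamma)$ and the positions of the $q_{j}$ then range over a compact set bounded away from the degenerate configurations, by the very definition \eqref{eq:AG-geometry-parametrize-Gamma-epsilon}. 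A Möbius transformation of $\widehat{\mathbb{C}}$ is determined by the images of three points, so $f_{\mathscr{D}',\mathscr{D}}$ depends real-analytically on $(q_{j}(\mathscr{D}'),q_{j}(\mathscr{D}))_{j\in S}$, hence on $(g_{1},g_{2})\in\Gamma_{\varepsilon}\times\Gamma_{\varepsilon}$ through the explicit formulas for the tangency points in terms of the curvatures; and when $g_{1}=g_{2}$ it is the identity (after normalization, a Euclidean isometry in general). So on the compact set $\Gamma_{\varepsilon}\times\Gamma_{\varepsilon}$, the $C^{1}$-distance of $f_{\mathscr{D}',\mathscr{D}}$ from a similarity, measured over $\overline{D_{\mathrm{cir}}(\mathscr{D}')}$, is a continuous function vanishing on the diagonal, hence bounded by $C_{\varepsilon}\rho_{\Gamma}(g_{1},g_{2})$.

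Next I would convert this into an estimate on Lipschitz constants of $f_{\mathscr{D}',\mathscr{D}}$ and its inverse restricted to the relevant closed disk: the derivative $Df_{\mathscr{D}',\mathscr{D}}$ at each point of $\overline{D_{\mathrm{cir}}(\mathscr{D}')}$ is within $C_{\varepsilon}\rho_{\Gamma}(g_{1},g_{2})$ of a fixed conformal linear map of norm $\lambda$ (the similarity ratio), so $\|Df_{\mathscr{D}',\mathscr{D}}\|\le\lambda(1+C_{\varepsilon}\rho_{\Gamma})$ pointwise and likewise from below, and symmetrically for $f_{\mathscr{D},\mathscr{D}'}$. Feeding these two one-sided Lipschitz bounds into the elementary scaling property $\mathscr{H}^{s}(f(A))\le C^{s}\mathscr{H}^{s}(A)$ recalled before Lemma~\ref{lem:AG-Hausdorff-dim}, applied with $s=d$ and $A=K(\mathscr{D}')$ (and its inverse), yields
\begin{equation*}
\bigl|\mathscr{H}^{d}(K(\mathscr{D}))-\mathscr{H}^{d}(K(\mathscr{D}'))\bigr|
	\le\bigl((1+C_{\varepsilon}\rho_{\Gamma})^{d}-1\bigr)\,\mathscr{H}^{d}(K(\mathscr{D}'))
	\le c_{1,\varepsilon}\,\mathscr{H}^{d}(K(\mathscr{D}'))\,\rho_{\Gamma}(g_{1},g_{2}),
\end{equation*}
using $d<2$ and that $\rho_{\Gamma}$ is bounded on $\Gamma_{\varepsilon}$; this is \eqref{eq:AG-HM-Lipschitz} with $w=\emptyset$, and \eqref{eq:AG-HM-comparable} with $w=\emptyset$ follows the same way from the two-sided Lipschitz bound. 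To pass to general $w\in W_{*}$, recall from Proposition~\ref{prop:curvatures-Mw} that $g_{i}M_{w}\in\Gamma$ is the curvature datum of $\mathscr{D}_{w}$ when $g_{i}$ is that of $\mathscr{D}^{(i)}$, so the inequalities for $(g_{1}M_{w},g_{2}M_{w})$ are just the $w=\emptyset$ inequalities applied to the pair $\mathscr{D}^{(1)}_{w},\mathscr{D}^{(2)}_{w}$ — provided $g_{1}M_{w},g_{2}M_{w}$ again lie in a common $\Gamma_{\varepsilon'}$.

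The main obstacle is precisely this last point: the datum $g_{i}M_{w}$ need not stay in $\Gamma_{\varepsilon}$ — the curvatures blow up as $|w|\to\infty$. The resolution, and the source of the exponents $\varepsilon^{1-30d}$ and $\varepsilon^{-15d}$, is to control how far $g_{i}M_{w}$ can drift: using \eqref{eq:M1nM2nM3n}–\eqref{eq:M1n23M2n31M3n12} (or rather the structure of $M_{1},M_{2},M_{3}$) one shows that if $g\in\Gamma_{\varepsilon}$ then, after rescaling by $\kappa(gM_{w})^{-1}$ to return to unit scale, the normalized datum lies in $\Gamma_{c\varepsilon^{15}}$ for an absolute constant $c$ — the power $15$ coming from a fixed finite bound on how many "bad" digits (those that contract one coordinate too strongly) can accumulate before the geometry is controlled, combined with the explicit entries of the $M_{j}$; and then $\rho_{\Gamma}(g_{1}M_{w},g_{2}M_{w})\le\|M_{w}\|\,\rho_{\Gamma}(g_{1},g_{2})$ with the operator-norm factor absorbed against $\mathscr{H}^{d}(K(\mathscr{D}_{w}))=\HM(g_{1}M_{w})$ via the scaling \eqref{eq:AG-HM-invariant}. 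Applying the $w=\emptyset$ case at scale $c\varepsilon^{15}$, i.e. with $c_{1,c\varepsilon^{15}}=c_{1}(c\varepsilon^{15})^{1-d}$ and tracking a further $\varepsilon^{\pm}$-power from the rescaling of $\mathscr{H}^{d}$, produces $c_{1,\varepsilon}=c_{1}\varepsilon^{1-30d}$ and $c_{2,\varepsilon}=c_{2}\varepsilon^{-15d}$ after bookkeeping. I expect the bulk of the work to be this uniform-geometry lemma — the quantitative statement that $M_{w}$ never drives a point of $\Gamma_{\varepsilon}$ out of $\Gamma_{c\varepsilon^{15}}$ at unit scale — while the Möbius-closeness and the Hausdorff-measure scaling are routine.
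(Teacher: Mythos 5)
Your $w=\emptyset$ argument (a single M\"{o}bius map $f_{\mathscr{D}',\mathscr{D}}$ uniformly close to a Euclidean isometry on $\overline{D_{\mathrm{cir}}(\mathscr{D}')}$ when $g_{1},g_{2}\in\Gamma_{\varepsilon}$, combined with the Lipschitz scaling property of $\mathscr{H}^{d}$) is exactly the mechanism the paper indicates. The gap is in your passage to general $w\in W_{*}$: the ``uniform-geometry lemma'' you rely on --- that for $g\in\Gamma_{\varepsilon}$ the rescaled datum $gM_{w}$ lies in $\Gamma_{c\varepsilon^{15}}$ for \emph{every} $w\in W_{*}$ --- is false. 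Take $w=1^{n}$: by \eqref{eq:M1nM2nM3n}, $gM_{1^{n}}=\bigl(\alpha+n^{2}(\beta+\gamma)+2n\kappa,\;\beta,\;\gamma,\;n(\beta+\gamma)+\kappa\bigr)$, so after rescaling to unit size (whether by $\kappa(gM_{1^{n}})^{-1}$ or by $\HM(gM_{1^{n}})^{1/d}$) the second and third coordinates are $O(n^{-1})$, and their sum eventually violates the constraint $\min\{\beta+\gamma,\gamma+\alpha,\alpha+\beta\}\geq\varepsilon'$ in \eqref{eq:AG-geometry-parametrize-Gamma-epsilon} for any fixed $\varepsilon'$; your heuristic of ``finitely many bad digits'' fails because the run $j^{n}$ can be arbitrarily long. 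The paper itself signals this: the compactness statement \eqref{eq:AG-HM-I-Gamma-epsilon0} is asserted only for $\tau\in I=\{j^{n}k\mid j\neq k\}$ (Definition \ref{dfn:index-I}, Proposition \ref{prop:AG-HM-I-comparable-norm}), and the remark after Corollary \ref{cor:AG-HM-Lipschitz}, that $\inf_{n\in\mathbb{N}}\HM\bigl(\tfrac{g_{0}M_{j^{n}}}{\|M_{j^{n}}\|}\bigr)/n>0$, says precisely that the normalized geometry degenerates along $j^{n}$; this also blocks your plan of absorbing the factor $\|M_{w}\|$ via \eqref{eq:AG-HM-invariant}, and controlling $\rho_{\Gamma}(g_{1}M_{w},g_{2}M_{w})$ by \eqref{eq:AG-HM-Lipschitz-Mw} would be circular, since that corollary is deduced from the theorem you are proving and carries an unbounded factor.

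The repair is to not re-apply the base case cell by cell. The one map $f:=f_{\mathscr{D}^{(2)},\mathscr{D}^{(1)}}$ determined by $g_{1},g_{2}\in\Gamma_{\varepsilon}$ maps the triple $\mathscr{D}^{(2)}$ onto $\mathscr{D}^{(1)}$, hence (by uniqueness of the inscribed disk and induction) maps $\mathscr{D}^{(2)}_{w}$ onto $\mathscr{D}^{(1)}_{w}$ and $K(\mathscr{D}^{(2)}_{w})$ onto $K(\mathscr{D}^{(1)}_{w})$ for \emph{all} $w\in W_{*}$ simultaneously. Since $\HM(g_{i}M_{w})=\mathscr{H}^{d}(K(\mathscr{D}^{(i)}_{w}))$ by Proposition \ref{prop:curvatures-Mw}, your uniform estimate --- pointwise Lipschitz constants of $f$ and $f^{-1}$ within $1\pm C_{\varepsilon}\rho_{\Gamma}(g_{1},g_{2})$ on the whole circumscribed disk --- already yields \eqref{eq:AG-HM-Lipschitz} and \eqref{eq:AG-HM-comparable} for every $w$ at once, with constants depending only on $\varepsilon$ (use the crude $\varepsilon$-dependent Lipschitz bound on $f$ when $\rho_{\Gamma}(g_{1},g_{2})$ is not small to get \eqref{eq:AG-HM-comparable}, and deduce \eqref{eq:AG-HM-Lipschitz} in that regime from it). This is the argument the paper sketches, and it makes the problematic lemma unnecessary.
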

Theorem \ref{thm:AG-HM-Lipschitz} easily leads to the following corollary.
Note that for each $w\in W_{*}$, $g\mapsto\HM(gM_{w})^{1/d}gM_{w}$
defines a map from $\{g\in\Gamma\mid\HM(g)=1\}$ to itself
by virtue of \eqref{eq:AG-HM-invariant}, and \eqref{eq:AG-HM-Lipschitz-Mw}
below gives an upper bound on its Lipschitz constant.
\begin{cor}\label{cor:AG-HM-Lipschitz}
There exist $c_{3},c_{4}\in(0,+\infty)$ such that for each $\varepsilon\in(0,1)$,
\begin{align}\label{eq:AG-HM-Lipschitz-Mw}
\rho_{\Gamma}\bigl(\HM(g_{1}M_{w})^{1/d}g_{1}M_{w},\HM(g_{2}M_{w})^{1/d}g_{2}M_{w}\bigr)
	&\leq c_{3,\varepsilon}\HM\bigl(\tfrac{g_{0}M_{w}}{\|M_{w}\|}\bigr)^{1/d}\rho_{\Gamma}(g_{1},g_{2})\!\\
\textrm{and}\qquad\bigl|\log\HM(g_{1}M_{w})-\log\HM(g_{2}M_{w})\bigr|
	&\leq c_{4,\varepsilon}\rho_{\Gamma}(g_{1},g_{2})
\label{eq:AG-HM-Lipschitz-log}
\end{align}
for any $g_{0},g_{1},g_{2}\in\Gamma_{\varepsilon}$ and any $w\in W_{*}$,
where $c_{3,\varepsilon}:=c_{3}\varepsilon^{-45d}$ and
$c_{4,\varepsilon}:=c_{4}\varepsilon^{1-45d}$.
\end{cor}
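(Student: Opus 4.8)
The plan is to derive Corollary~\ref{cor:AG-HM-Lipschitz} from Theorem~\ref{thm:AG-HM-Lipschitz} by elementary manipulations, treating the two estimates separately. Throughout, fix $\varepsilon\in(0,1)$ and $w\in W_{*}$, and abbreviate $h_{i}:=\HM(g_{i}M_{w})$ for $i=0,1,2$ and $g_{i}'':=g_{i}M_{w}/\|M_{w}\|$, noting that $g_{i}''$ lies in the ray through $g_{i}M_{w}$ so that by \eqref{eq:AG-HM-invariant} we have $\HM(g_{i}'')=\|M_{w}\|^{d}h_{i}$, and that $g_{i}''$ depends continuously and comparably on $i$ by \eqref{eq:AG-HM-comparable}. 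The key preliminary observation is that \eqref{eq:AG-HM-comparable} makes all the quantities $h_{0},h_{1},h_{2}$ mutually comparable up to the factor $c_{2,\varepsilon}$, so that replacing one by another in an estimate costs only a power of $\varepsilon$; this is what lets the right-hand sides of \eqref{eq:AG-HM-Lipschitz-Mw} and \eqref{eq:AG-HM-Lipschitz-log} be written in terms of $g_{0}$ (resp.\ without any $\HM$ factor at all).

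First I would prove \eqref{eq:AG-HM-Lipschitz-log}. Dividing \eqref{eq:AG-HM-Lipschitz} by $h_{1}$ gives $|h_{2}/h_{1}-1|\leq c_{1,\varepsilon}\rho_{\Gamma}(g_{1},g_{2})$, hence $h_{2}/h_{1}\in[1-c_{1,\varepsilon}\delta,\,1+c_{1,\varepsilon}\delta]$ with $\delta:=\rho_{\Gamma}(g_{1},g_{2})$. Since $\Gamma_{\varepsilon}$ has diameter bounded by a fixed multiple of $\varepsilon^{-1}$, one can first dispose of the case $c_{1,\varepsilon}\delta\geq 1/2$ trivially using \eqref{eq:AG-HM-comparable} (the logarithm difference is then $\leq\log c_{2,\varepsilon}\leq(\text{const})\cdot\varepsilon^{-15d}\leq(\text{const})\cdot\varepsilon^{-45d}\delta\cdot\varepsilon^{1-45d}/\varepsilon^{1-45d}$, absorbed by enlarging $c_{4}$ and using boundedness of the diameter), and otherwise use $|\log(1+x)|\leq 2|x|$ for $|x|\leq 1/2$ to get $|\log h_{1}-\log h_{2}|\leq 2c_{1,\varepsilon}\delta$, which is of the required form with $c_{4,\varepsilon}=2c_{1,\varepsilon}=2c_{1}\varepsilon^{1-30d}\leq 2c_{1}\varepsilon^{1-45d}$ (using $\varepsilon<1$).

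For \eqref{eq:AG-HM-Lipschitz-Mw}, write the left-hand side as $\rho_{\Gamma}(h_{1}^{1/d}g_{1}M_{w},h_{2}^{1/d}g_{2}M_{w})=|h_{1}^{1/d}g_{1}-h_{2}^{1/d}g_{2}|\cdot\|M_{w}\|$ after recalling $\rho_{\Gamma}$ only sees the first three coordinates and $M_{w}$ acts linearly; I would then use the triangle inequality
\begin{equation*}
|h_{1}^{1/d}g_{1}-h_{2}^{1/d}g_{2}|\leq h_{1}^{1/d}|g_{1}-g_{2}|+|h_{1}^{1/d}-h_{2}^{1/d}|\,|g_{2}|.
\end{equation*}
The first term contributes $h_{1}^{1/d}\|M_{w}\|\,\delta$; the second is handled by the elementary bound $|h_{1}^{1/d}-h_{2}^{1/d}|\leq d^{-1}\max(h_{1},h_{2})^{1/d-1}|h_{1}-h_{2}|$ combined with \eqref{eq:AG-HM-Lipschitz} and \eqref{eq:AG-HM-comparable}, giving a bound $\lesssim c_{1,\varepsilon}c_{2,\varepsilon}^{|1/d-1|}h_{1}^{1/d}\|M_{w}\|\,\delta$ after also using $|g_{2}|\lesssim\varepsilon^{-1}$ on $\Gamma_{\varepsilon}$. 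Thus the whole left-hand side is $\lesssim c_{1,\varepsilon}c_{2,\varepsilon}^{O(1)}\varepsilon^{-1}h_{1}^{1/d}\|M_{w}\|\,\delta$, and finally I replace $h_{1}^{1/d}\|M_{w}\|=\HM(g_{1}M_{w})^{1/d}\|M_{w}\|=\HM(g_{1}M_{w}/\|M_{w}\|)^{1/d}$ by $\HM(g_{0}M_{w}/\|M_{w}\|)^{1/d}$ at the cost of another factor $c_{2,\varepsilon}^{1/d}$ via \eqref{eq:AG-HM-comparable}; collecting all the powers of $\varepsilon$ (each of $c_{1,\varepsilon},c_{2,\varepsilon}$ contributes $\varepsilon^{1-30d}$ or $\varepsilon^{-15d}$, and there are boundedly many of them, plus the $\varepsilon^{-1}$) yields an exponent no worse than $-45d$, which is \eqref{eq:AG-HM-Lipschitz-Mw}.

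The main obstacle is purely bookkeeping: making sure the accumulated powers of $\varepsilon$ from the repeated applications of \eqref{eq:AG-HM-comparable} and from $|g_{2}|\leq\sqrt{3}\,\varepsilon^{-1}$ actually fit under the stated exponents $-45d$ and $1-45d$, rather than some larger (more negative) exponent. Since $d\in(1,2)$ by Theorem~\ref{thm:AG-Hausdorff-dim-bounds}, one has $|1/d-1|<1$ and each factor $\varepsilon^{-15d}$ dominates $\varepsilon^{-1}$, so a crude count — at most three factors of $c_{2,\varepsilon}$-type and one of $c_{1,\varepsilon}$-type — already gives an exponent bounded below by $-45d$, with room to spare; the constants $c_{3},c_{4}$ then absorb all the numerical factors (the $2$'s, the $d^{-1}$, the $\sqrt{3}$, and the diameter bound of $\Gamma_{\varepsilon}$). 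No new geometric input beyond Theorem~\ref{thm:AG-HM-Lipschitz} is needed.
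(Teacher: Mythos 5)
Your overall route---deducing both estimates from Theorem \ref{thm:AG-HM-Lipschitz} together with the scaling identity \eqref{eq:AG-HM-invariant} and the geometry of $\Gamma_{\varepsilon}$---is the natural and evidently intended one (the paper gives no separate argument, saying only that the corollary follows easily from Theorem \ref{thm:AG-HM-Lipschitz}), and your detailed estimates do reproduce the stated exponents. Three steps, however, need repair as written. First, the claimed identity $\rho_{\Gamma}\bigl(h_{1}^{1/d}g_{1}M_{w},h_{2}^{1/d}g_{2}M_{w}\bigr)=|h_{1}^{1/d}g_{1}-h_{2}^{1/d}g_{2}|\cdot\|M_{w}\|$ is at best an inequality (operator norm), and since the fourth rows of $M_{1},M_{2},M_{3}$ are nonzero, $M_{w}$ feeds the $\kappa$-coordinate into the first three; hence you must control the full four-vector difference $|g_{1}-g_{2}|$, whereas $\rho_{\Gamma}(g_{1},g_{2})$ sees only the first three coordinates. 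The fix is the observation that on $\Gamma_{\varepsilon}$ one has $\kappa(g)\geq\varepsilon/2$ and $|\kappa(g_{1})-\kappa(g_{2})|\leq\bigl(\kappa(g_{1})+\kappa(g_{2})\bigr)^{-1}|\kappa(g_{1})^{2}-\kappa(g_{2})^{2}|\leq 6\varepsilon^{-2}\rho_{\Gamma}(g_{1},g_{2})$, so $|g_{1}-g_{2}|\leq 7\varepsilon^{-2}\rho_{\Gamma}(g_{1},g_{2})$; the extra $\varepsilon^{-2}$ affects only your first term and keeps it far below $\varepsilon^{-45d}$.

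Second, your closing ``crude count'' is wrong: one $c_{1,\varepsilon}$, three $c_{2,\varepsilon}$'s and the $\varepsilon^{-1}$ would give exponent $(1-30d)-45d-1=-75d$, which overshoots the allowed $-45d$ and could not be absorbed into $c_{3}$. What actually saves the argument is the sharper count you in fact performed in the second term: one $c_{1,\varepsilon}$, a net $c_{2,\varepsilon}^{(1-1/d)+1/d}=c_{2,\varepsilon}$ (mean-value step plus the replacement of $g_{1}$ by $g_{0}$), and one $\varepsilon^{-1}$ from $|g_{2}|\lesssim\varepsilon^{-1}$, whose exponents sum to $(1-30d)+(-15d)+(-1)=-45d$ \emph{exactly}---so the target is met with no room to spare, and the bookkeeping must be done at this precision. (Relatedly, the elementary inequality should read $|h_{1}^{1/d}-h_{2}^{1/d}|\leq d^{-1}\min(h_{1},h_{2})^{1/d-1}|h_{1}-h_{2}|$, with $\min$ rather than $\max$; the factor $c_{2,\varepsilon}^{1-1/d}$ you insert via \eqref{eq:AG-HM-comparable} is exactly what converts $\min(h_{1},h_{2})^{1/d-1}$ into $h_{1}^{1/d-1}$, so your displayed bound stands.) Third, in the proof of \eqref{eq:AG-HM-Lipschitz-log} the case $c_{1,\varepsilon}\delta\geq 1/2$ is not settled by the diameter of $\Gamma_{\varepsilon}$ (an upper bound on $\delta$ is useless there) but by the lower bound $\delta\geq(2c_{1})^{-1}\varepsilon^{30d-1}$ that the case hypothesis provides, which makes $c_{4}\varepsilon^{1-45d}\delta\geq(2c_{1})^{-1}c_{4}\varepsilon^{-15d}\geq\log c_{2,\varepsilon}$ for a suitable absolute $c_{4}$; alternatively, skip the case split entirely via $|\log h_{1}-\log h_{2}|\leq|h_{1}-h_{2}|/\min(h_{1},h_{2})\leq c_{1,\varepsilon}c_{2,\varepsilon}\rho_{\Gamma}(g_{1},g_{2})$, which produces the exponent $1-45d$ of the statement directly.
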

The factor $\HM\bigl(\tfrac{g_{0}M_{w}}{\|M_{w}\|}\bigr)^{1/d}$
in the right-hand side of \eqref{eq:AG-HM-Lipschitz-Mw} cannot be bounded
by a constant independent of $w$; in fact, it can be shown that
$\inf_{n\in\mathbb{N}}\HM\bigl(\tfrac{g_{0}M_{j^{n}}}{\|M_{j^{n}}\|}\bigr)/n>0$
for any $g_{0}\in\Gamma$ and any $j\in S$. This factor in \eqref{eq:AG-HM-Lipschitz-Mw},
however, can be replaced by a constant for the words $w\in W_{*}$ ending with $j^{n}k$
for some $j,k\in S$ with $j\not=k$ and $n\in\mathbb{N}$, and consequently we obtain
Proposition \ref{prop:AG-HM-I-comparable-norm} and Corollary \ref{cor:AG-HM-Lipschitz-I}
below. The idea of working with this class of words to study dynamics on the Apollonian
gasket is originally due to Mauldin and Urba\'{n}ski \cite{MauldinUrbanski:AdvMath1998}
and plays crucial roles also in our study.
\begin{dfn}\label{dfn:index-I}
We define $I\subset W_{*}\setminus\{\emptyset\}$ by
$I:=\{j^{n}k\mid\textrm{$j,k\in S$, $j\not=k$, $n\in\mathbb{N}$}\}$, so that
$\tau\not\asymp\upsilon$ for any $\tau,\upsilon\in I$ with $\tau\not=\upsilon$ and
$K(\mathscr{D})\setminus V_{0}(\mathscr{D})=\bigcup_{\tau\in I}K_{\tau}(\mathscr{D})$
for $\mathscr{D}\in\TDT$.
\end{dfn}
\begin{prop}\label{prop:AG-HM-I-comparable-norm}
There exist $c_{5},c_{6}\in(0,+\infty)$ and $\varepsilon_{0}\in(0,1)$
such that for any $g\in\Gamma$ and any $\tau\in I$,
\begin{align}\label{eq:AG-HM-I-comparable-norm}
c_{5}&|gM_{\tau}|^{-d}\leq\HM(gM_{\tau})\leq c_{6}|gM_{\tau}|^{-d}\\
\textrm{and}&\qquad\HM(gM_{\tau})^{1/d}gM_{\tau}\in\Gamma_{\varepsilon_{0}}.
\label{eq:AG-HM-I-Gamma-epsilon0}
\end{align}
\end{prop}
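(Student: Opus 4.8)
The plan is to leverage the two invariances of $\HM$ recorded in \eqref{eq:AG-HM-invariant}: its symmetry under permutations of the first three coordinates, and its homogeneity of degree $-d$ (note also that $\Gamma$ is invariant under positive dilations, since $\kappa$ is homogeneous of degree $1$). Given $g\in\Gamma$ and $\tau\in I$, write $gM_{\tau}=(c'_{1},c'_{2},c'_{3},\kappa')$; by Proposition \ref{prop:curvatures-Mw} we have $gM_{\tau}\in\Gamma$, hence $\kappa'>0$ and $c'_{1}c'_{2}+c'_{2}c'_{3}+c'_{3}c'_{1}=(\kappa')^{2}$, and we set $h:=gM_{\tau}/\kappa'\in\Gamma$, so that $\HM(gM_{\tau})=(\kappa')^{-d}\HM(h)$ and $\HM(gM_{\tau})^{1/d}gM_{\tau}=\HM(h)^{1/d}h$ by \eqref{eq:AG-HM-invariant}. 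I claim it suffices to prove: \textup{(a)} there is $\varepsilon_{*}\in(0,1)$, independent of $g$ and $\tau$, with $h\in\Gamma_{\varepsilon_{*}}$; and \textup{(b)} $\kappa'\le|gM_{\tau}|\le 5\kappa'$. Indeed, $\Gamma_{\varepsilon_{*}}$ is compact and nonempty, so applying \eqref{eq:AG-HM-comparable} with $w=\emptyset$, in both directions around any fixed $g_{*}\in\Gamma_{\varepsilon_{*}}$, produces constants $0<m\le M<\infty$ with $m\le\HM\le M$ on $\Gamma_{\varepsilon_{*}}$; then \textup{(a)}, \textup{(b)} and $\HM(gM_{\tau})=(\kappa')^{-d}\HM(h)$ give \eqref{eq:AG-HM-I-comparable-norm} with $c_{5}:=m$ and $c_{6}:=5^{d}M$, while $\HM(gM_{\tau})^{1/d}gM_{\tau}=\HM(h)^{1/d}h$ is the dilation of $h\in\Gamma_{\varepsilon_{*}}$ by the factor $\HM(h)^{1/d}\in[m^{1/d},M^{1/d}]$ and therefore lies in $\Gamma_{\varepsilon_{0}}$ for $\varepsilon_{0}:=\min\{m^{1/d}/3,\,(4M^{1/d})^{-1},\,1/2\}$, which is \eqref{eq:AG-HM-I-Gamma-epsilon0}.

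To prove \textup{(a)} and \textup{(b)}, observe that by the symmetry of \eqref{eq:M1n23M2n31M3n12} under permutations of $S=\{1,2,3\}$ (together with the corresponding symmetry of $\HM$ and of $|\cdot|$) it is enough to treat $\tau=1^{n}2$. For $g=(\alpha,\beta,\gamma,\kappa)\in\Gamma$ and $n\in\mathbb{N}$, the formula for $M_{1^{n}2}$ in \eqref{eq:M1n23M2n31M3n12} gives
\begin{align*}
c'_{1}&=(\beta+\gamma)n^{2}+2\kappa n+\alpha, & c'_{2}&=(\beta+\gamma)(n+1)^{2}+2\kappa(n+1)+\alpha,\\
c'_{3}&=\gamma, & \kappa'&=\beta n(n+1)+\gamma(n^{2}+n+1)+\kappa(2n+1)+\alpha.
\end{align*}
Elementary term-by-term comparisons, valid for every $n\in\mathbb{N}$ and all $\alpha,\beta,\gamma,\kappa\ge 0$, yield
\[
c'_{3}\le c'_{1}\le\kappa'\le 3c'_{1},\qquad c'_{1}\le c'_{2}\le 4c'_{1},\qquad c'_{3}\le\tfrac{1}{3}\kappa'
\]
(for the last one, $\kappa'\ge\gamma(n^{2}+n+1)\ge 3\gamma=3c'_{3}$). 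Hence $\kappa'\le\max\{c'_{1},c'_{2},c'_{3},\kappa'\}\le 4\kappa'$, so $(\kappa')^{2}\le|gM_{\tau}|^{2}=(c'_{1})^{2}+(c'_{2})^{2}+(c'_{3})^{2}+(\kappa')^{2}\le 19(\kappa')^{2}$, which is \textup{(b)}. Moreover the coordinates of $h=gM_{\tau}/\kappa'$ all lie in $[0,4]$, its fourth coordinate equals $1=\kappa(c'_{1}/\kappa',c'_{2}/\kappa',c'_{3}/\kappa')$ because $c'_{1}c'_{2}+c'_{2}c'_{3}+c'_{3}c'_{1}=(\kappa')^{2}$, and the smallest pairwise sum of its first three coordinates is $(c'_{1}+c'_{3})/\kappa'\ge c'_{1}/\kappa'\ge\tfrac{1}{3}$; thus $h\in\Gamma_{\varepsilon_{*}}$ with $\varepsilon_{*}:=1/4$, which is \textup{(a)}.

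Assembling these (with $\varepsilon_{*}=1/4$, the resulting constants $m,M$, and hence $\varepsilon_{0}$, are absolute) completes the proof. The only non-routine ingredient is the displayed block of inequalities for $c'_{1},c'_{2},c'_{3},\kappa'$: what matters is that, for words in $I$, two of the three curvatures of $\mathscr{D}_{\tau}$ remain comparable to $\kappa(\mathscr{D}_{\tau})$ uniformly in $n$, and this is precisely what confines $h$ to a fixed compact $\Gamma_{\varepsilon_{*}}$, away from the part of the boundary of $\Gamma$ near which $\HM$ is unbounded --- which is also the reason the index set $I$, rather than all of $W_{*}$, is the right object here. Everything else is bookkeeping with the homogeneity relations \eqref{eq:AG-HM-invariant} and the estimate \eqref{eq:AG-HM-comparable}.
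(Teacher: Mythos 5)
Your proof is correct: the explicit coordinates of $gM_{1^{n}2}$ from \eqref{eq:M1n23M2n31M3n12}, the term-by-term inequalities $c'_{3}\le c'_{1}\le\kappa'\le 3c'_{1}$, $c'_{2}\le 4c'_{1}$, the reduction to $\tau=1^{n}2$ by permutation symmetry, and the use of the homogeneity/symmetry \eqref{eq:AG-HM-invariant} together with \eqref{eq:AG-HM-comparable} (with $w=\emptyset$) to bound $\HM$ on $\Gamma_{1/4}$ all check out and yield constants uniform in $g\in\Gamma$ and $\tau\in I$. The survey states this proposition without proof (deferring to \cite{K:WeylAG}), but your argument is precisely the route it indicates --- normalize $gM_{\tau}$ by its last coordinate $\kappa(\mathscr{D}_{\tau})$ and use the explicit matrices $M_{j^{n}k}$ to confine the normalized point to a fixed compact $\Gamma_{\varepsilon_{*}}$ --- so nothing further is needed.
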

\begin{cor}\label{cor:AG-HM-Lipschitz-I}
There exists $c_{7}\in(0,+\infty)$ such that for each $\varepsilon\in(0,1)$,
\begin{equation}\label{eq:AG-HM-Lipschitz-I-Mw}
\rho_{\Gamma}\bigl(\HM(g_{1}M_{w\tau})^{1/d}g_{1}M_{w\tau},\HM(g_{2}M_{w\tau})^{1/d}g_{2}M_{w\tau}\bigr)
	\leq c_{7,\varepsilon}\rho_{\Gamma}(g_{1},g_{2})
\end{equation}
for any $g_{1},g_{2}\in\Gamma_{\varepsilon}$, any $w\in W_{*}$ and any $\tau\in I$,
where $c_{7,\varepsilon}:=c_{7}\varepsilon^{-45d}$.
\end{cor}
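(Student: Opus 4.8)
The plan is to combine Corollary \ref{cor:AG-HM-Lipschitz} with the uniform control provided by Proposition \ref{prop:AG-HM-I-comparable-norm}. The key observation is that the problematic factor $\HM\bigl(\tfrac{g_{0}M_{v}}{\|M_{v}\|}\bigr)^{1/d}$ appearing in \eqref{eq:AG-HM-Lipschitz-Mw} becomes bounded once the word $v$ ends with an element $\tau\in I$. First I would apply \eqref{eq:AG-HM-Lipschitz-Mw} with the word $w$ there replaced by $w\tau$ and with $g_0=g_1$: this gives
\begin{equation*}
\rho_{\Gamma}\bigl(\HM(g_{1}M_{w\tau})^{1/d}g_{1}M_{w\tau},\HM(g_{2}M_{w\tau})^{1/d}g_{2}M_{w\tau}\bigr)
	\leq c_{3,\varepsilon}\HM\bigl(\tfrac{g_{1}M_{w\tau}}{\|M_{w\tau}\|}\bigr)^{1/d}\rho_{\Gamma}(g_{1},g_{2}),
\end{equation*}
valid for $g_1,g_2\in\Gamma_\varepsilon$. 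So the whole task reduces to bounding $\HM\bigl(\tfrac{g_{1}M_{w\tau}}{\|M_{w\tau}\|}\bigr)^{1/d}$ by an absolute constant.

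To estimate that factor I would argue as follows. Writing $h:=g_1 M_w$, note that $h\in\Gamma$ (by Proposition \ref{prop:curvatures-Mw}), so by the scaling identity \eqref{eq:AG-HM-invariant} and the fact that $\HM$ is defined on all of $\Gamma$ we have, for the positive scalar $\lambda:=\|M_{w\tau}\|$,
\[
\HM\bigl(\tfrac{h M_{\tau}}{\lambda}\bigr)^{1/d}=\lambda^{-1}\HM(hM_\tau)^{1/d}.
\]
Now apply the upper bound in \eqref{eq:AG-HM-I-comparable-norm} with $g$ there taken to be $h=g_1M_w$: this yields $\HM(hM_\tau)\le c_6|hM_\tau|^{-d}$, hence $\HM(hM_\tau)^{1/d}\le c_6^{1/d}|g_1M_{w\tau}|^{-1}$. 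Therefore
\[
\HM\bigl(\tfrac{g_1M_{w\tau}}{\|M_{w\tau}\|}\bigr)^{1/d}\le c_6^{1/d}\,\frac{1}{\|M_{w\tau}\|\,|g_1M_{w\tau}|}.
\]
It remains to bound $\|M_{w\tau}\|\,|g_1M_{w\tau}|$ from below by a positive constant. Since all entries of the matrices $M_1,M_2,M_3$ are nonnegative integers and each $M_j$ has a row whose entries are not all zero with the first coordinate contributing, one sees that $\|M_{w\tau}\|\ge 1$ for every word; and since $g_1\in\Gamma_\varepsilon$ has at least one coordinate bounded below (indeed $\kappa(g_1)\ge$ some positive multiple of $\varepsilon$) while $M_{w\tau}$ is a product of matrices each of operator norm $\ge 1$, the vector $g_1M_{w\tau}$ has Euclidean norm bounded below by a constant multiple of $|g_1|\ge \kappa(g_1)>0$; combining, $\|M_{w\tau}\|\,|g_1M_{w\tau}|\ge c$ for an absolute $c>0$. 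Actually the cleanest route avoids even this: since $g_1M_{w\tau}\in\Gamma$, we have $|g_1M_{w\tau}|\ge\kappa(g_1M_{w\tau})=\kappa((g_1M_w)M_\tau)>0$, and the matrices $M_j$ act expansively on $\Gamma$ in the sense that $|g M_j|\ge|g|$ for $g\in\Gamma$ (each column of $M_j$ has a $1$ on the diagonal and nonnegative entries), so $|g_1M_{w\tau}|\ge|g_1|\ge\varepsilon$ once $g_1\in\Gamma_\varepsilon$; combined with $\|M_{w\tau}\|\ge\|M_\tau\|\ge 1$ this gives the desired lower bound $\|M_{w\tau}\|\,|g_1M_{w\tau}|\ge\varepsilon$.

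Putting the pieces together: $\HM\bigl(\tfrac{g_1M_{w\tau}}{\|M_{w\tau}\|}\bigr)^{1/d}\le c_6^{1/d}\varepsilon^{-1}$, and so
\[
\rho_{\Gamma}\bigl(\HM(g_{1}M_{w\tau})^{1/d}g_{1}M_{w\tau},\HM(g_{2}M_{w\tau})^{1/d}g_{2}M_{w\tau}\bigr)
	\leq c_{3}\varepsilon^{-45d}\cdot c_6^{1/d}\varepsilon^{-1}\,\rho_{\Gamma}(g_1,g_2),
\]
which is of the claimed form with $c_7:=c_3 c_6^{1/d}$ (absorbing the extra $\varepsilon^{-1}$ into a harmless adjustment, or noting $\varepsilon^{-1}\le\varepsilon^{-45d}$ since $d>1$, so that $c_{7,\varepsilon}=c_7\varepsilon^{-45d}$ suffices after renaming $c_7$). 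The main obstacle is the bookkeeping in the second paragraph — one must be careful that Proposition \ref{prop:AG-HM-I-comparable-norm} is being applied with $g=g_1M_w\in\Gamma$ rather than with $g\in\Gamma_\varepsilon$, and that the $\varepsilon$-dependence is tracked correctly so that the final constant $c_7$ is genuinely independent of both $\varepsilon$ and $w,\tau$; everything else is a direct substitution into the already-established estimates. \qed
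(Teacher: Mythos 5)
Your overall strategy — apply \eqref{eq:AG-HM-Lipschitz-Mw} with the word $w\tau$ and then show that the factor $\HM\bigl(\tfrac{g_{0}M_{w\tau}}{\|M_{w\tau}\|}\bigr)^{1/d}$ is bounded by a constant using Proposition \ref{prop:AG-HM-I-comparable-norm} — is exactly the route the paper indicates. But the execution contains a genuine error: the scaling identity is applied upside down. By \eqref{eq:AG-HM-invariant} we have $\HM(sg)=s^{-d}\HM(g)$, so with $s=\lambda^{-1}$, $\HM\bigl(\tfrac{hM_{\tau}}{\lambda}\bigr)^{1/d}=\lambda\,\HM(hM_{\tau})^{1/d}$, not $\lambda^{-1}\HM(hM_{\tau})^{1/d}$ as you wrote. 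Consequently, after invoking the upper bound in \eqref{eq:AG-HM-I-comparable-norm} (which you correctly apply with $g=g_{1}M_{w}\in\Gamma$), the quantity you must control is $\|M_{w\tau}\|/|g_{1}M_{w\tau}|$ from above, i.e.\ you need $|g_{1}M_{w\tau}|\gtrsim\|M_{w\tau}\|$; the lower bound on the product $\|M_{w\tau}\|\,|g_{1}M_{w\tau}|$ that occupies your third paragraph is the wrong task and is trivial precisely because of the sign error. Moreover, the nontrivial normalization by $\|M_{w\tau}\|$ is the whole point of the factor in \eqref{eq:AG-HM-Lipschitz-Mw}, so the step you skipped is the actual content of the corollary. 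Your closing bookkeeping is also invalid even on its own terms: an extra factor $\varepsilon^{-1}$ cannot be ``absorbed'' into the $\varepsilon$-independent constant $c_{7}$, and $c_{3}\varepsilon^{-45d}\cdot\varepsilon^{-1}=c_{3}\varepsilon^{-45d-1}$ is not of the asserted form $c_{7}\varepsilon^{-45d}$.

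The gap is repairable along your own lines. Since \eqref{eq:AG-HM-Lipschitz-Mw} holds for an arbitrary $g_{0}\in\Gamma_{\varepsilon}$, take a fixed $g_{0}$, e.g.\ $g_{0}=(1,1,1,\sqrt{3})\in\Gamma_{\varepsilon}$ for every $\varepsilon\in(0,1)$. Then, with the correct scaling, $\HM\bigl(\tfrac{g_{0}M_{w\tau}}{\|M_{w\tau}\|}\bigr)^{1/d}=\|M_{w\tau}\|\,\HM(g_{0}M_{w\tau})^{1/d}\leq c_{6}^{1/d}\,\|M_{w\tau}\|/|g_{0}M_{w\tau}|$ by \eqref{eq:AG-HM-I-comparable-norm} applied with $g=g_{0}M_{w}$ and this $\tau\in I$. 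Since $M_{w\tau}$ has nonnegative entries, every coordinate of $g_{0}M_{w\tau}$ dominates the corresponding column sum (all entries of $g_{0}$ are $\geq 1$), so $|g_{0}M_{w\tau}|\geq\max_{i,j}(M_{w\tau})_{ij}\geq\tfrac{1}{4}\|M_{w\tau}\|$, whence the factor is bounded by the absolute constant $4c_{6}^{1/d}$ and Corollary \ref{cor:AG-HM-Lipschitz} yields \eqref{eq:AG-HM-Lipschitz-I-Mw} with $c_{7}:=4c_{3}c_{6}^{1/d}$, as stated. Using $g_{0}=g_{1}$ as you did forces a lower bound on $|g_{1}M_{w\tau}|/\|M_{w\tau}\|$ that degenerates as $\varepsilon\to 0$ (coordinates of $g_{1}$ may vanish), which is exactly why it cannot produce the $\varepsilon$-dependence claimed in the corollary.
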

\section{The asymptotic formula for counting functions}\label{sec:counting}
The following theorem is a corollary of the general result by Oh and Shah
\cite[Theorem 1.4]{OhShah:InventMath2012} for circle packing fractals
invariant with respect to a large class of Kleinian groups.
\begin{thm}[{Oh and Shah \cite[Corollary 1.8]{OhShah:InventMath2012}}]\label{thm:circle-counting-AG}
There exists $c_{8}\in(0,+\infty)$ such that for any $\mathscr{D}\in\TDT$,
\begin{equation}\label{eq:Oh-Shah-circle-counting}
\lim\nolimits_{\lambda\to+\infty}\lambda^{-d}\#\{w\in W_{*}\mid\curv(D_{\mathrm{in}}(\mathscr{D}_{w}))\leq\lambda\}
	=c_{8}\mathscr{H}^{d}(K(\mathscr{D})).
\end{equation}
\end{thm}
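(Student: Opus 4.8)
\section*{Proof proposal}

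The plan is to recast the counting function as a first-passage quantity for a
functional of a Markov chain on the parameter space $\Gamma$ and then apply a
renewal-theoretic argument (Kesten's renewal theorem for Markov-chain functionals,
as advertised in the abstract). First I would reorganize the index set. By
Definition \ref{dfn:AG-construction} and Proposition \ref{prop:circumscribed-inscribed}-(2),
each disk removed in the construction is $D_{\mathrm{in}}(\mathscr{D}_{w})$ for a
unique $w\in W_{*}$, and by Proposition \ref{prop:curvatures-Mw} its curvature is the
last-but-one bookkeeping quantity; more precisely, writing $g=(\curv(D_{1}),\ldots,\kappa(\mathscr{D}))\in\Gamma$,
one has $\curv(D_{\mathrm{in}}(\mathscr{D}_{w}))=(gM_{w})\cdot(1,1,1,2)$, which by
\eqref{eq:AG-HM-I-comparable-norm} is comparable to $\HM(gM_{w})^{-1/d}$ up to
multiplicative constants once $w$ ends in an element of $I$. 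So I would first use
the decomposition $K(\mathscr{D})\setminus V_0(\mathscr{D})=\bigcup_{\tau\in I}K_\tau(\mathscr{D})$
from Definition \ref{dfn:index-I}, iterated, to write $W_{*}$ (up to the finitely many
exceptional initial words not of this form, which contribute a negligible $O(1)$) as a
disjoint union indexed by finite strings $\tau_1\cdots\tau_n$ with each $\tau_i\in I$,
and correspondingly factor $M_w=M_{\tau_1}\cdots M_{\tau_n}$.

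Next I would set up the Markov chain. Using \eqref{eq:AG-HM-I-Gamma-epsilon0}, the
normalized vector $X_n:=\HM(gM_{\tau_1\cdots\tau_n})^{1/d}\,gM_{\tau_1\cdots\tau_n}$
lies in the fixed compact set $\Gamma_{\varepsilon_0}$ for all $n$, and the transition
$X_{n-1}\mapsto X_n$ depends only on $X_{n-1}$ and the choice of $\tau_n\in I$ (the map
$g\mapsto\HM(gM_\tau)^{1/d}gM_\tau$ preserves $\{\HM=1\}$ by \eqref{eq:AG-HM-invariant}),
so with a suitable probability weighting on $I$ — the natural one being
$p_\tau(X):=\HM(XM_\tau)/\HM(X)$, which by Theorem \ref{thm:AG-Hausdorff-meas} and
$\sum_{\tau\in I}\mathscr{H}^d(K_\tau(\mathscr{D}))=\mathscr{H}^d(K(\mathscr{D}))$ sums
to $1$ — this is a Markov chain on a compact metric space. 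The additive functional is
$S_n:=-\log\HM(gM_{\tau_1\cdots\tau_n})=\sum_{i=1}^n\bigl(-\log p_{\tau_i}(X_{i-1})\bigr)$,
and the counting function in \eqref{eq:Oh-Shah-circle-counting} is, up to the comparison
constants from \eqref{eq:AG-HM-I-comparable-norm} and an $O(1)$ boundary term, essentially
$\sum_{n\ge0}\mathbf{P}_x(S_n\le \tfrac{d}{?}\log\lambda<S_{n+1})$-type weighted count,
i.e.\ a renewal measure evaluated at $d\log\lambda$. The Lipschitz estimates in
Corollary \ref{cor:AG-HM-Lipschitz} and Corollary \ref{cor:AG-HM-Lipschitz-I}
(uniform over $\Gamma_{\varepsilon_0}$, with the $\|M_w\|$-factor tamed precisely because
words end in $I$) give the contraction/regularity needed to verify the hypotheses of
Kesten's theorem: a spectral-gap/Doeblin-type condition for the chain, together with
non-arithmeticity of the functional $-\log\HM$, the latter being the step where the
specific arithmetic of the matrices $M_1,M_2,M_3$ in \eqref{eq:M1M2M3}–\eqref{eq:M1n23M2n31M3n12}
must be exploited to rule out a lattice distribution.

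Kesten's renewal theorem then yields $\#\{w\mid -\log\HM(gM_w)\le t\}\sim C(x)\,e^{t}$
as $t\to+\infty$ with $C(x)$ continuous in the starting point $x=\HM(g)^{1/d}g$; undoing
the normalization via \eqref{eq:AG-HM-invariant} and the comparability
\eqref{eq:AG-HM-I-comparable-norm} converts this into
$\lambda^{-d}\#\{w\mid\curv(D_{\mathrm{in}}(\mathscr{D}_w))\le\lambda\}\to c_8\,\mathscr{H}^d(K(\mathscr{D}))$,
where the proportionality to $\mathscr{H}^d(K(\mathscr{D}))$ rather than an arbitrary function
of $g$ falls out of the scaling relation \eqref{eq:AG-HM-invariant} together with the
normalization (the renewal constant is evaluated on the scale-invariant slice $\{\HM=1\}$,
so the $g$-dependence enters only through the overall factor $\mathscr{H}^d(K(\mathscr{D}))=\HM(g)$),
and $c_8$ is the universal renewal constant, independent of $\mathscr{D}$.
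I expect the main obstacle to be the verification of Kesten's hypotheses: establishing the
required recurrence/mixing for the chain on $\Gamma_{\varepsilon_0}$ uniformly (the delicate
point being that $\|M_{j^n k}\|\to\infty$, so one must genuinely use that the normalized
maps still contract on average, which is where Corollary \ref{cor:AG-HM-Lipschitz-I} is
indispensable), and proving the non-arithmeticity of $-\log\HM$ under the semigroup
generated by $\{M_\tau\}_{\tau\in I}$; controlling the passage from the discrete renewal
count to the continuous asymptotic in $\lambda$, including the $O(1)$ contribution of words
not lying in $I^{*}$, is routine by comparison but must be done carefully to get the clean
constant $c_8$.
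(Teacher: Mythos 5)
Your overall strategy is exactly the paper's: the paper proves Theorem \ref{thm:circle-counting-AG} by specializing the general counting result (Theorem \ref{thm:general-counting-AG}), whose proof sets up precisely your Markov chain on $\{g\in\Gamma\mid\HM(g)=1\}$ with transition weights $\HM(gM_{\tau})$, $\tau\in I$, additive functional $-\log\HM$, and verifies Kesten's conditions via Corollaries \ref{cor:AG-HM-Lipschitz} and \ref{cor:AG-HM-Lipschitz-I}. However, two steps as you wrote them have genuine problems. First, the words of $W_{*}$ not expressible as concatenations of blocks from $I$ are not ``finitely many exceptional initial words'' contributing ``$O(1)$'': every $w\in W_{*}$ has the form $vj^{n}$ with $v\in W^{I}_{*}$, $j\in S$, $n\in\mathbb{N}\cup\{0\}$, so the leftover words are the infinitely many powers $j^{n}$, and at each renewal step they contribute $\#\bigl\{j^{n}\bigm|gM_{j^{n}}\bigl(\begin{smallmatrix}1\\1\\1\\2\end{smallmatrix}\bigr)\leq\lambda\bigr\}$, which by \eqref{eq:M1nM2nM3n} grows like $\lambda^{1/2}$ (the curvatures along $j^{n}$ grow quadratically in $n$). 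This is still negligible only because $d>1>1/2$; the paper encodes it as the remainder hypothesis \eqref{eq:general-counting-AG-remainder} with $\eta=1/2$ and the explicit bound $3\bigl(\min\{\beta+\gamma,\gamma+\alpha,\alpha+\beta\}\bigr)^{-1/2}\lambda^{1/2}$, and your bookkeeping needs to be corrected to that effect.

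Second, you apply Kesten's renewal theorem directly to a first-passage/counting quantity, i.e.\ to a summand built from indicator-type functions of $(X_{n},s-\mathscr{V}_{n})$. Kesten's Theorem 2 requires the function summed along the chain to be continuous (with exponential decay in $s$), and the relevant defect $\mathscr{R}_{0}(g,s)=e^{-ds}\bigl(\mathscr{N}(g,e^{s})-\sum_{\tau\in I}\mathscr{N}(gM_{\tau},e^{s})\bigr)$ is not continuous. The paper circumvents this by replacing $\mathscr{N}$ with the exponential sum $\mathscr{Z}(g,t)=\sum_{n}e^{-\lambda_{n}(g)t}$, applying Kesten's theorem to the continuous defect $\mathscr{R}$, showing strict positivity of the resulting constant by a support argument (positivity does not come for free: one must rule out that $\mathscr{R}$ vanishes $\nu\times ds$-a.e.), and only then recovering the asymptotics of $\mathscr{N}(g,\lambda)$ via Karamata's Tauberian theorem. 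The passage you call ``routine by comparison'' is exactly where these devices are needed; without them the application of Kesten's theorem is not justified as stated. Your remaining points (non-arithmeticity, uniform control on $\Gamma_{\varepsilon_{0}}$, the scaling argument showing the limit is proportional to $\HM(g)=\mathscr{H}^{d}(K(\mathscr{D}))$) are consistent with the paper's treatment.
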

Theorem \ref{thm:circle-counting-AG} can be deduced from the following theorem
applicable to more general counting functions, including that for the
eigenvalues of the ``geometrically canonical'' Laplacian on the Apollonian gasket
introduced by Teplyaev in \cite[Theorem 5.17]{Tep:energySG} and studied extensively
by the author in \cite{K:WeylAG}; see also \cite[\S3 and \S4]{K:WeylSurvAnal}
for a short exposition.
\begin{thm}[\cite{K:WeylAG}]\label{thm:general-counting-AG}
Let $\Gamma'$ denote either of $\Gamma$ and $\Gamma^{\circ}$, and
for each $n\in\mathbb{N}$ let $\lambda_{n}:\Gamma'\to(0,+\infty)$ be continuous and
satisfy $\lambda_{n}(sg)=s\lambda_{n}(g)$ for any $(g,s)\in\Gamma'\times(0,\infty)$.
Suppose that $\lambda_{1}(g)=\min_{n\in\mathbb{N}}\lambda_{n}(g)$ and
$\lim_{n\to\infty}\lambda_{n}(g)=+\infty$ for any $g\in\Gamma'$, set
$\mathscr{N}(g,\lambda):=\#\{n\in\mathbb{N}\mid\lambda_{n}(g)\leq\lambda\}$
for each $(g,\lambda)\in\Gamma'\times[0,+\infty)$, and suppose that there exist
$\eta\in[0,d)$ and $c\in(0,+\infty)$ such that for any
$(g,\lambda)\in(\Gamma'\cap\Gamma_{\varepsilon_{0}})\times[0,+\infty)$,
\begin{equation}\label{eq:general-counting-AG-remainder}
\sum\nolimits_{\tau\in I}\mathscr{N}(gM_{\tau},\lambda)\leq\mathscr{N}(g,\lambda)
	\leq\sum\nolimits_{\tau\in I}\mathscr{N}(gM_{\tau},\lambda)+c\lambda^{\eta}+c,
\end{equation}
where $\varepsilon_{0}$ is as in \eqref{eq:AG-HM-I-Gamma-epsilon0}. Then there exists
$c_{0}\in(0,+\infty)$ such that for any $g\in\Gamma'\cap\Gamma_{\varepsilon_{0}}$,%
\begin{equation}\label{eq:general-counting-AG}
\lim\nolimits_{\lambda\to+\infty}\lambda^{-d}\mathscr{N}(g,\lambda)=c_{0}\HM(g).
\end{equation}
Moreover, additionally if for each $g\in\Gamma'$ there exists $c_{g}\in(0,+\infty)$ such
that \eqref{eq:general-counting-AG-remainder} with $c_{g}$ in place of $c$ holds for any
$\lambda\in[0,+\infty)$, then \eqref{eq:general-counting-AG} holds for any $g\in\Gamma'$.\vspace*{-3.05pt}\newpage
\end{thm}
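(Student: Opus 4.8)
The plan is to recognize the inequality \eqref{eq:general-counting-AG-remainder} as a \emph{renewal-type equation} on the parameter space and to apply Kesten's renewal theorem for functionals of Markov chains on general state spaces. First I would reduce to the self-similar ``unit-measure'' slice: by the scaling $\lambda_n(sg)=s\lambda_n(g)$ and \eqref{eq:AG-HM-invariant}, it is natural to normalize each $g$ to $\widehat{g}:=\HM(g)^{1/d}g$ lying on $\{\HM=1\}$, and to track, along a word $w=\tau^{(1)}\cdots\tau^{(m)}\in I^{m}$, the normalized geometry $\widehat{gM_{w}}=\HM(gM_{w})^{1/d}gM_{w}\in\Gamma_{\varepsilon_0}$ (this lands in $\Gamma_{\varepsilon_0}$ by \eqref{eq:AG-HM-I-Gamma-epsilon0}) together with the accumulated ``log-scale'' $S_{m}:=\log\bigl(\HM(g)/\HM(gM_{w})\bigr)=-\log\HM(gM_{w})$ when $\HM(g)=1$. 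Iterating the lower and upper halves of \eqref{eq:general-counting-AG-remainder} over $m$ levels and writing $F(\widehat g,t):=e^{-td}\mathscr{N}(\widehat g,e^{t})$, one gets that $F$ is sandwiched between $\sum_{w\in I^{m}}p_{w}(\widehat g)\,F(\widehat{gM_{w}},t-S_{m})$ and the same sum plus an error controlled by $c\lambda^{\eta}+c$ with $\eta<d$, where $p_{w}(\widehat g):=\HM(gM_{w})$ is a probability weight since $\sum_{\tau\in I}\HM(gM_{\tau})=\HM(g)$ (the content of $K(\mathscr{D})\setminus V_0(\mathscr{D})=\bigsqcup_{\tau\in I}K_{\tau}(\mathscr{D})$ from Definition \ref{dfn:index-I}, combined with finite additivity of $\mathscr{H}^{d}$ and $\HM(gM_{\tau})=\mathscr{H}^{d}(K_{\tau}(\mathscr{D}))$).

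This exhibits a Markov chain on the compact state space $\Gamma_{\varepsilon_0}\cap\{\HM=1\}$ (a.s.\ visited, after one step, by Proposition \ref{prop:AG-HM-I-comparable-norm}) with transition kernel $\widehat g\mapsto\widehat{gM_{\tau}}$ of mass $\HM(gM_{\tau})$, and an additive functional given by the increments $\log\HM(gM_{\tau})^{-1}$, i.e.\ essentially $d\log\|M_{\tau}\|$ up to the bounded corrections in \eqref{eq:AG-HM-I-comparable-norm}. To run Kesten's theorem I need: (i) the chain has a unique stationary distribution with good mixing — this follows from the uniform Lipschitz contraction of $\widehat g\mapsto\widehat{gM_{w\tau}}$ supplied by Corollary \ref{cor:AG-HM-Lipschitz-I}, which makes the chain a contractive iterated function system on a compact metric space and hence uniquely ergodic with exponential mixing; (ii) the step distribution of the additive functional is non-lattice — here one argues that the set of achievable increments $\{\log\HM(gM_{\tau})^{-1}\}$ generates a dense subgroup of $\mathbb{R}$, which can be seen from the explicit matrices $M_{j^{n}k}$ in \eqref{eq:M1n23M2n31M3n12} giving increments asymptotic to $2d\log n$ plus bounded terms, so their pairwise differences are dense; (iii) integrability of the increments against the stationary law (finite mean $\mu>0$), again immediate by compactness and $\varepsilon_0$-boundedness via \eqref{eq:AG-HM-Lipschitz-log}. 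Kesten's renewal theorem then yields that the renewal measure, integrated against the ``reward'' coming from the base-level term $\mathscr{N}(\widehat g,e^{s})$ for $s$ in a bounded window, converges; concretely $e^{-td}\mathscr{N}(\widehat g,e^{t})\to c_0$ as $t\to\infty$, with $c_0$ the ratio (average reward)$/\mu$ independent of the starting point $\widehat g\in\Gamma_{\varepsilon_0}\cap\{\HM=1\}$. Undoing the normalization via \eqref{eq:AG-HM-invariant} gives \eqref{eq:general-counting-AG} for all $g\in\Gamma'\cap\Gamma_{\varepsilon_0}$.

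For the final sentence, suppose $g\in\Gamma'$ is arbitrary and \eqref{eq:general-counting-AG-remainder} holds with a constant $c_g$ for all $\lambda$. Applying the lower and upper bounds just \emph{once} expresses $\mathscr{N}(g,\lambda)$ between $\sum_{\tau\in I}\mathscr{N}(gM_{\tau},\lambda)$ and that sum plus $c_g\lambda^{\eta}+c_g$; since $gM_{\tau}$, after normalization, already lies in $\Gamma_{\varepsilon_0}$ (Proposition \ref{prop:AG-HM-I-comparable-norm}), each $\lambda^{-d}\mathscr{N}(gM_{\tau},\lambda)\to c_0\HM(gM_{\tau})$ by the case already proved, the error term is $o(\lambda^{d})$ as $\eta<d$, and the tail of $\sum_{\tau\in I}\HM(gM_{\tau})=\HM(g)$ is summable (Proposition \ref{prop:AG-HM-I-comparable-norm} gives $\HM(gM_{\tau})\asymp|gM_{\tau}|^{-d}$ and the norms grow, making the series convergent), so one can interchange the limit and the sum by dominated convergence to conclude $\lambda^{-d}\mathscr{N}(g,\lambda)\to c_0\sum_{\tau\in I}\HM(gM_{\tau})=c_0\HM(g)$.

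I expect the main obstacle to be the \emph{non-lattice verification} (ii): one must show the additive functional is genuinely non-arithmetic, not merely that individual increments vary, and this requires exploiting the structure of $I$ and the matrices in \eqref{eq:M1n23M2n31M3n12} carefully — equivalently, ruling out that all values $\log\HM(gM_{\tau})$ lie in a coset of a discrete subgroup of $\mathbb{R}$, uniformly along the chain. A secondary technical point is making the iterated sandwich estimate uniform in $m$ and in the base-window parameter so that Kesten's hypotheses on the ``reward function'' (bounded, directly Riemann integrable) are met; the Lipschitz estimates of Corollary \ref{cor:AG-HM-Lipschitz-I} together with the polynomial-in-$\lambda$ crude bound $\mathscr{N}(g,\lambda)\lesssim\lambda^{d}$ (itself a by-product of the upper half of \eqref{eq:general-counting-AG-remainder} iterated against $\sum\HM(gM_{\tau})^{1}<\infty$) should handle this.
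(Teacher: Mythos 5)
Your overall strategy is the same as the paper's: normalize to the slice $\{g\in\Gamma\mid\HM(g)=1\}$, read the weights $\HM(gM_{\tau})$ (which sum to $\HM(g)$ by the decomposition of $K(\mathscr{D})$ over $I$) as transition probabilities of a Markov chain taking values in $\widetilde{\Gamma}\cap\Gamma_{\varepsilon_{0}}$, iterate \eqref{eq:general-counting-AG-remainder} into a renewal identity for $e^{-ds}\mathscr{N}(g,e^{s})$, apply Kesten's renewal theorem with unique ergodicity supplied by the contraction estimate of Corollary \ref{cor:AG-HM-Lipschitz-I}, and finish the ``moreover'' clause by one application of \eqref{eq:general-counting-AG-remainder} plus dominated convergence over $\tau\in I$. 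However, there is a genuine gap exactly at the point where you feed the reward into Kesten's theorem: the natural reward $\mathscr{R}_{0}(g,s)=e^{-ds}\bigl(\mathscr{N}(g,e^{s})-\sum_{\tau\in I}\mathscr{N}(gM_{\tau},e^{s})\bigr)$ is a step function in $s$, whereas the version of Kesten's Theorem 2 being invoked (Theorem \ref{thm:Kesten-renewal}) requires the summand $f$ to be \emph{continuous} with exponential decay; this is not merely the uniform direct-Riemann-integrability point you flag, and Lipschitz estimates on $\HM$ cannot repair the jumps of a counting function. The paper's proof avoids this by switching from $\mathscr{N}$ to the continuous Laplace-type function $\mathscr{Z}(g,t):=\sum_{n\in\mathbb{N}}e^{-\lambda_{n}(g)t}$, whose associated reward $\mathscr{R}(g,s)=e^{-ds}\bigl(\mathscr{Z}(g,e^{-s})-\sum_{\tau\in I}\mathscr{Z}(gM_{\tau},e^{-s})\bigr)$ is continuous, applying Kesten there, and only afterwards recovering \eqref{eq:general-counting-AG} via Karamata's Tauberian theorem. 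Your sketch contains no substitute for this smoothing-plus-Tauberian step, so the central application of the renewal theorem is not justified as written.

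A second, smaller but real omission is the positivity of $c_{0}$: the theorem asserts $c_{0}\in(0,+\infty)$, while the limit produced by Kesten's theorem is an integral of the reward against $\nu(dy)\,ds$ and could a priori vanish. The paper rules this out by a separate contradiction argument: if the limit were $0$, then $\mathscr{R}$ would vanish on $\supp[\nu]\times\mathbb{R}$, and the renewal identity would force $\widetilde{\mathscr{Z}}$ to vanish there, which is absurd. You simply declare $c_{0}$ to be a positive ratio. Your remaining points are in the right direction: the non-lattice verification is indeed part of checking Kesten's Conditions I.1--I.4 (though your increment asymptotics carry a stray factor of $d$, since the increment is $-d^{-1}\log\HM(gM_{\tau})\approx\log|gM_{\tau}|$), and the dominated-convergence step in the ``moreover'' part should be anchored on the uniform bound $\mathscr{N}(gM_{\tau},\lambda)\leq C\HM(gM_{\tau})\lambda^{d}$ coming from the boundedness of $\widetilde{\mathscr{N}}$ on $\widetilde{\Gamma}'_{\varepsilon_{0}}\times\mathbb{R}$ together with the homogeneity $\lambda_{n}(sg)=s\lambda_{n}(g)$.
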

Theorem \ref{thm:circle-counting-AG} is an easy corollary of
Theorem \ref{thm:general-counting-AG}. Indeed, choose a bijection
$\mathbb{N}\ni n\mapsto w(n)\in W_{*}$ with $w(1)=\emptyset$ and define
$\lambda_{n}(g):=gM_{w(n)}\Bigl(\begin{smallmatrix}1\\1\\1\\2\end{smallmatrix}\Bigr)$
for each $(g,n)\in\Gamma\times\mathbb{N}$. Then
$\#\{w\in W_{*}\mid\curv(D_{\mathrm{in}}(\mathscr{D}_{w}))\leq\lambda\}
	=\#\{n\in\mathbb{N}\mid\lambda_{n}(g(\mathscr{D}))\leq\lambda\}
	=:\mathscr{N}(g(\mathscr{D}),\lambda)$
for any $\lambda\in[0,+\infty)$ for each $\mathscr{D}=(D_{1},D_{2},D_{3})\in\TDT$ by
Proposition \ref{prop:curvatures-Mw} and Proposition \ref{prop:circumscribed-inscribed}-(2), where
$g(\mathscr{D}):=\bigl(\curv(D_{1}),\curv(D_{2}),\curv(D_{3}),\kappa(\mathscr{D})\bigr)\in\Gamma$,
and clearly $\lambda_{1}(g)=\min_{n\in\mathbb{N}}\lambda_{n}(g)$ and
$\lim_{n\to\infty}\lambda_{n}(g)=+\infty$ for any $g\in\Gamma$ by Definition \ref{dfn:Mw}.
Furthermore for each $g=(\alpha,\beta,\gamma,\kappa)\in\Gamma$ and for any
$\lambda\in[0,+\infty)$, taking $\mathscr{D}\in\TDT$ with $g(\mathscr{D})=g$ and noting that
$W_{*}=\{j^{n}\mid\textrm{$j\in S$, $n\in\mathbb{N}\cup\{0\}$}\}\cup\bigcup_{\tau\in I}\{\tau w\mid w\in W_{*}\}$
with the union disjoint, from \eqref{eq:M1nM2nM3n} we easily obtain
\begin{align*}
\mathscr{N}(g,\lambda)
	&=\sum\nolimits_{\tau\in I}\mathscr{N}(gM_{\tau},\lambda)
	+\#\biggl\{j^{n}\biggm|\textrm{$j\in S$, $n\in\mathbb{N}\cup\{0\}$, $gM_{j^{n}}\Bigl(\begin{smallmatrix}1\\1\\1\\2\end{smallmatrix}\Bigr)\leq\lambda$}\biggr\}\\
&\leq\sum\nolimits_{\tau\in I}\mathscr{N}(gM_{\tau},\lambda)
	+3\bigl(\min\{\beta+\gamma,\gamma+\alpha,\alpha+\beta\}\bigr)^{-1/2}\lambda^{1/2}.
\end{align*}
Thus Theorem \ref{thm:general-counting-AG} is applicable to the above choice of $\lambda_{n}(g)$
and implies that \eqref{eq:general-counting-AG} holds for any $g\in\Gamma$ for some
$c_{0}\in(0,+\infty)$, which is nothing but Theorem \ref{thm:circle-counting-AG}.

The rest of this article is devoted to a brief sketch of the proof of Theorem \ref{thm:general-counting-AG}.
The key idea is to apply Kesten's renewal theorem \cite[Theorem 2]{Kesten:AOP1974}
to the Markov chain on $\{g\in\Gamma\mid\HM(g)=1\}$ defined as follows.
\begin{dfn}\label{dfn:words-I}
\begin{itemize}[label=\textup{(1)},align=left,leftmargin=*,parsep=0pt,itemsep=0pt]
\item[\textup{(1)}]We set $W^{I}_{0}:=\{\emptyset\}$, where $\emptyset$ denotes the empty word,
	$W^{I}_{m}:=I^{m}=\{\omega_{1}\ldots\omega_{m}\mid\textrm{$\omega_{k}\in I$ for any $k\in\{1,\ldots,m\}$}\}$
	for $m\in\mathbb{N}$ and $W^{I}_{*}:=\bigcup_{m\in\mathbb{N}\cup\{0\}}W^{I}_{m}$,
	which are regarded as subsets of $W_{*}$ in the natural manner.
\item[\textup{(2)}]We set
	$\Sigma^{I}:=I^{\mathbb{N}}=\{\omega_{1}\omega_{2}\omega_{3}\ldots\mid\textrm{$\omega_{k}\in I$ for any $k\in\mathbb{N}$}\}$,
	which is equipped with the product topology of the discrete topology on $I$,
	and define the \emph{shift map} $\sigma^{I}:\Sigma^{I}\to\Sigma^{I}$ by
	$\sigma^{I}(\omega_{1}\omega_{2}\omega_{3}\ldots):=\omega_{2}\omega_{3}\omega_{4}\ldots$.
	For $w\in W^{I}_{*}$ we define $\sigma^{I}_{w}:\Sigma^{I}\to\Sigma^{I}$ by
	$\sigma^{I}_{w}(\omega_{1}\omega_{2}\omega_{3}\ldots):=w\omega_{1}\omega_{2}\omega_{3}\ldots$.
	For $\omega=\omega_{1}\omega_{2}\omega_{3}\ldots\in\Sigma^{I}$ and
	$m\in\mathbb{N}\cup\{0\}$, we set $[\omega]^{I}_{m}:=\omega_{1}\ldots\omega_{m}\in W^{I}_{m}$.
\end{itemize}
\end{dfn}
\begin{dfn}\label{dfn:Markov-chain-AG}
Set $\widetilde{\Gamma}:=\{g\in\Gamma\mid\HM(g)=1\}$,
$[g]_{\Gamma}:=\HM(g)^{1/d}g\in\widetilde{\Gamma}$ for each $g\in\Gamma$,
$\Omega:=\widetilde{\Gamma}\times\Sigma^{I}$, which is equipped with the product
topology, and let $\mathscr{F}$ denote the Borel $\sigma$-field of $\Omega$. We define
$\theta:\Omega\to\Omega$ by
$\theta(g,\omega):=\bigl([gM_{[\omega]^{I}_{1}}]_{\Gamma},\sigma^{I}(\omega)\bigr)$,
and for each $n\in\mathbb{N}\cup\{0\}$ we define
$X_{n}:\Omega\to\widetilde{\Gamma}$ and $u_{n},\mathscr{V}_{n}:\Omega\to\mathbb{R}$
by $X_{n}(g,\omega):=[gM_{[\omega]^{I}_{n}}]_{\Gamma}$,
$u_{n}(g,\omega):=d^{-1}\log\bigl(\HM\bigl(gM_{[\omega]^{I}_{n}}\bigr)/\HM\bigl(gM_{[\omega]^{I}_{n+1}}\bigr)\bigr)$
and $\mathscr{V}_{n}(g,\omega):=-d^{-1}\log\HM\bigl(gM_{[\omega]^{I}_{n}}\bigr)=\sum_{k=0}^{n-1}u_{k}$,
so that $X_{n}\circ\theta=X_{n+1}$ and $u_{n}\circ\theta=u_{n+1}$ for any $n\in\mathbb{N}\cup\{0\}$.
Also for each $g\in\widetilde{\Gamma}$, we define $\mathbb{P}_{g}$
as the unique probability measure on $(\Omega,\mathscr{F})$ such that
$\mathbb{P}_{g}[\{g\}\times\sigma^{I}_{w}(\Sigma^{I})]=\HM(gM_{w})$ for any $w\in W^{I}_{*}$,
and the expectation with respect to $\mathbb{P}_{g}$ is denoted by $\mathbb{E}_{g}[(\cdot)]$.
\end{dfn}
Clearly $\bigl(\Omega,\mathscr{F},\{X_{n}\}_{n\in\mathbb{N}\cup\{0\}},\{\mathbb{P}_{g}\}_{g\in\widetilde{\Gamma}}\bigr)$
is a time-homogeneous Markov chain on $\widetilde{\Gamma}$ with transition function
$\mathscr{P}(g,\cdot):=\mathbb{P}_{g}[X_{1}\in\cdot]=\sum_{\tau\in I}\HM(gM_{\tau})\delta_{[gM_{\tau}]_{\Gamma}}$,
where $\delta_{g}$ for $g\in\widetilde{\Gamma}$ denotes the (unique) Borel probability
measure on $\widetilde{\Gamma}$ such that $\delta_{g}(\{g\})=1$. It is also easy
to see that for each $g\in\widetilde{\Gamma}$ and each $n\in\mathbb{N}\cup\{0\}$,
\emph{the conditional law $\mathbb{P}_{g}[u_{n}\in\cdot\mid\{X_{k}\}_{k\in\mathbb{N}\cup\{0\}},\{u_{k}\}_{k\in(\mathbb{N}\cup\{0\})\setminus\{n\}}]$
of $u_{n}$ is determined solely by $X_{n},X_{n+1}$}; see \cite[(1.1)]{Kesten:AOP1974} for the precise
formulation of this property. Thus the stochastic processes of Definition \ref{dfn:Markov-chain-AG}
fall within the general framework of Kesten's renewal theory in \cite{Kesten:AOP1974}.

\newpage
The idea of considering these stochastic processes is hinted by \eqref{eq:general-counting-AG-remainder}.
Indeed, assume the setting of Theorem \ref{thm:general-counting-AG}, set
$\widetilde{\Gamma}'_{\varepsilon_{0}}:=\widetilde{\Gamma}\cap\Gamma'\cap\Gamma_{\varepsilon_{0}}$
and define
$\widetilde{\mathscr{N}},\mathscr{R}_{0}:\widetilde{\Gamma}'_{\varepsilon_{0}}\times\mathbb{R}\to[0,+\infty)$
by $\widetilde{\mathscr{N}}(g,s):=e^{-ds}\mathscr{N}(g,e^{s})$ and
$\mathscr{R}_{0}(g,s):=e^{-ds}\bigl(\mathscr{N}(g,e^{s})-\sum_{\tau\in I}\mathscr{N}(gM_{\tau},e^{s})\bigr)$.
Then it easily follows from the assumptions of Theorem \ref{thm:general-counting-AG} that
$0\leq\mathscr{R}_{0}(g,s)\leq c_{9}e^{-(d-\eta)|s|}$ for any
$(g,s)\in\widetilde{\Gamma}'_{\varepsilon_{0}}\times\mathbb{R}$
for some $c_{9}\in(0,+\infty)$ and that for any
$(g,s)\in\widetilde{\Gamma}'_{\varepsilon_{0}}\times\mathbb{R}$,
\begin{align}\notag
\widetilde{\mathscr{N}}(g&,s)-\mathscr{R}_{0}(g,s)
	=\sum\nolimits_{\tau\in I}\HM(gM_{\tau})e^{-d(s+d^{-1}\log\HM(gM_{\tau}))}
		\mathscr{N}\bigl([gM_{\tau}]_{\Gamma},e^{s+d^{-1}\log\HM(gM_{\tau})}\bigr)\\
&=\sum\nolimits_{\tau\in I}\HM(gM_{\tau})\widetilde{\mathscr{N}}\bigl([gM_{\tau}]_{\Gamma},s+d^{-1}\log\HM(gM_{\tau})\bigr)
	=\mathbb{E}_{g}\bigl[\widetilde{\mathscr{N}}(X_{1},s-\mathscr{V}_{1})\bigr].
\label{eq:general-counting-AG-pre-renewal}
\end{align}
A repetitive use of \eqref{eq:general-counting-AG-pre-renewal} further shows that
\begin{equation}\label{eq:general-counting-AG-renewal}
\widetilde{\mathscr{N}}(g,s)
	=\mathbb{E}_{g}\Bigl[\sum\nolimits_{n\in\mathbb{N}\cup\{0\}}\mathscr{R}_{0}(X_{n},s-\mathscr{V}_{n})\Bigr]
	\qquad\textrm{for any $(g,s)\in\widetilde{\Gamma}'_{\varepsilon_{0}}\times\mathbb{R}$,}
\end{equation}
which together with $\mathscr{R}_{0}(g,s)\leq c_{9}e^{-(d-\eta)|s|}$ implies that
$\sup_{(g,s)\in\widetilde{\Gamma}'_{\varepsilon_{0}}\times\mathbb{R}}\widetilde{\mathscr{N}}(g,s)<+\infty$.

Thus the proof of Theorem \ref{thm:general-counting-AG} is reduced to proving that
a function on $\widetilde{\Gamma}'_{\varepsilon_{0}}\times\mathbb{R}$ of the form
\eqref{eq:general-counting-AG-renewal} converges to $c_{8}$ as $s\to+\infty$ for any
$g\in\widetilde{\Gamma}'_{\varepsilon_{0}}$ for some $c_{8}\in(0,+\infty)$. This is exactly what
Kesten's renewal theorem \cite[Theorem 2]{Kesten:AOP1974} asserts under a reasonable set of assumptions on
$\bigl(\Omega,\mathscr{F},\{(X_{n},u_{n})\}_{n\in\mathbb{N}\cup\{0\}},\{\mathbb{P}_{g}\}_{g\in\widetilde{\Gamma}}\bigr)$
and $\mathscr{R}_{0}$, among which the following \emph{unique ergodicity}
of $\{(X_{n},u_{n})\}_{n\in\mathbb{N}\cup\{0\}}$ is the most important:
\begin{prop}\label{prop:invariant-meas}
\begin{itemize}[label=\textup{(1)},align=left,leftmargin=*,parsep=0pt,itemsep=0pt]
\item[\textup{(1)}]There exists a unique Borel probability measure $\nu$ on $\widetilde{\Gamma}$ such that
	$\nu(A)=\int_{\widetilde{\Gamma}}\mathscr{P}(g,A)\,d\nu(g)$ for any Borel subset $A$ of $\widetilde{\Gamma}$.
	Moreover, $\nu(\widetilde{\Gamma}\cap\Gamma^{\circ}\cap\Gamma_{\varepsilon_{0}})=1$.
\item[\textup{(2)}]The probability measure $\mathbb{P}_{\nu}$ on $(\Omega,\mathscr{F})$
	defined by $\mathbb{P}_{\nu}[A]:=\int_{\widetilde{\Gamma}}\mathbb{P}_{g}[A]\,d\nu(g)$
	for each $A\in\mathscr{F}$ is invariant and ergodic with respect to $\theta$.
\end{itemize}
\end{prop}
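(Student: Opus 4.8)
The plan is to realise $\nu$ by the Krylov--Bogolyubov theorem and then derive its uniqueness from an \emph{e-property} of the transition operator together with an explicit accessibility statement; part \textup{(2)} will then be a consequence of standard Markov ergodic theory. Write $\widetilde{\Gamma}_{\varepsilon_{0}}:=\widetilde{\Gamma}\cap\Gamma_{\varepsilon_{0}}$ and $Pf(g):=\sum_{\tau\in I}\HM(gM_{\tau})f\bigl([gM_{\tau}]_{\Gamma}\bigr)$. First I would confine the chain: by \eqref{eq:AG-HM-I-Gamma-epsilon0}, $[gM_{\tau}]_{\Gamma}\in\Gamma_{\varepsilon_{0}}$ for every $g\in\Gamma$ and $\tau\in I$, so $\mathscr{P}(g,\widetilde{\Gamma}_{\varepsilon_{0}})=1$ for all $g\in\widetilde{\Gamma}$, and hence every invariant probability measure and every Ces\`{a}ro average $N^{-1}\sum_{n=1}^{N}\mathscr{P}^{n}(g,\cdot)$ is carried by the compact Polish space $\widetilde{\Gamma}_{\varepsilon_{0}}$ (closed in $\Gamma_{\varepsilon_{0}}$ since $\HM$ is continuous). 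Next, $P$ is Feller: each summand $g\mapsto\HM(gM_{\tau})f([gM_{\tau}]_{\Gamma})$ is continuous, and by \eqref{eq:AG-HM-I-comparable-norm} and \eqref{eq:M1n23M2n31M3n12} one has $\HM(gM_{j^{n}k})\le c_{6}|gM_{j^{n}k}|^{-d}\le c_{6}\varepsilon_{0}^{-d}n^{-2d}$ uniformly on $\widetilde{\Gamma}_{\varepsilon_{0}}$ (a suitable coordinate of $gM_{j^{n}k}$ is $\ge\varepsilon_{0}n^{2}$), so $\sup_{g}\sum_{|\tau|>N}\HM(gM_{\tau})\to0$ because $2d>1$; Krylov--Bogolyubov then produces an invariant $\nu$ on $\widetilde{\Gamma}_{\varepsilon_{0}}$.

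To get $\nu(\widetilde{\Gamma}\cap\Gamma^{\circ}\cap\Gamma_{\varepsilon_{0}})=1$ I would show the ``bad'' set $B:=\widetilde{\Gamma}_{\varepsilon_{0}}\setminus\Gamma^{\circ}$ is negligible because $\Gamma^{\circ}$ is absorbing and reached uniformly fast. Each $M_{w}$ has strictly positive diagonal (true for $M_{1},M_{2},M_{3}$, and $(AB)_{ii}\ge A_{ii}B_{ii}$ for nonnegative $A,B$), so $g\in\Gamma^{\circ}$ forces $gM_{w}\in(0,\infty)^{4}$, hence $[gM_{w}]_{\Gamma}\in\Gamma^{\circ}$, so $\mathscr{P}^{2}(g,B)=0$ for $g\in\Gamma^{\circ}\cap\widetilde{\Gamma}$; and since $1231=(1^{1}2)(3^{1}1)\in W^{I}_{2}$ while $\Phi_{1231}=\Phi_{1}\circ\Phi_{3}\circ\Phi_{2}\circ\Phi_{1}$ replaces $D_{1},D_{2},D_{3}$ in turn by inscribed disks of positive curvature, $gM_{1231}\in(0,\infty)^{4}$ for all $g\in\Gamma$, so $\mathscr{P}^{2}(g,\Gamma^{\circ})\ge\HM(gM_{1231})\ge\delta_{0}:=\inf_{h\in\widetilde{\Gamma}_{\varepsilon_{0}}}\HM(hM_{1231})>0$ by continuity on a compact set. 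Hence $\nu(B)=\int_{B}\mathscr{P}^{2}(g,B)\,d\nu(g)\le(1-\delta_{0})\nu(B)$, so $\nu(B)=0$.

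Uniqueness of $\nu$ is the crux, and I would deduce it from a Lasota--Szarek-type criterion: a Markov--Feller operator on a Polish space that has the \emph{e-property} and for which some state $z$ satisfies $\inf_{x}\liminf_{N}N^{-1}\sum_{n=1}^{N}P^{n}\mathbf{1}_{U}(x)>0$ for every neighbourhood $U$ of $z$ has exactly one invariant probability measure. For the e-property, for Lipschitz $f$ and $g_{1},g_{2}\in\widetilde{\Gamma}_{\varepsilon_{0}}$ I would use $\sum_{w\in W^{I}_{n}}\HM(gM_{w})=1$ and the splitting
\begin{align*}
\bigl|P^{n}f(g_{1})-P^{n}f(g_{2})\bigr|
	&\le\|f\|_{\infty}\sum_{w\in W^{I}_{n}}\bigl|\HM(g_{1}M_{w})-\HM(g_{2}M_{w})\bigr|\\
	&\quad{}+\Lip(f)\sum_{w\in W^{I}_{n}}\HM(g_{2}M_{w})\,\rho_{\Gamma}\bigl([g_{1}M_{w}]_{\Gamma},[g_{2}M_{w}]_{\Gamma}\bigr),
\end{align*}
bounding the first sum by $c_{4,\varepsilon_{0}}e^{c_{4,\varepsilon_{0}}\diam(\Gamma_{\varepsilon_{0}})}\rho_{\Gamma}(g_{1},g_{2})$ via \eqref{eq:AG-HM-Lipschitz-log} and the second by $c_{7,\varepsilon_{0}}\Lip(f)\rho_{\Gamma}(g_{1},g_{2})$ via \eqref{eq:AG-HM-Lipschitz-I-Mw} (each $w\in W^{I}_{n}$ ends with a letter in $I$); this modulus of continuity is independent of $n$, which (as the chain enters $\widetilde{\Gamma}_{\varepsilon_{0}}$ after one step) gives the e-property. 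For the accessibility, \eqref{eq:M1n23M2n31M3n12} gives $[gM_{1^{n}2}]_{\Gamma}\to[(1,1,0,1)]_{\Gamma}=:p$ uniformly in $g\in\Gamma_{\varepsilon_{0}}$, so with $z:=[pM_{1231}]_{\Gamma}\in\Gamma^{\circ}$: for each neighbourhood $U$ of $z$ there are $N\in\mathbb{N}$ and $\delta_{U}>0$ with $\mathscr{P}^{3}(g,U)\ge\HM\bigl(gM_{1^{N}21231}\bigr)\ge\delta_{U}$ for all $g\in\widetilde{\Gamma}_{\varepsilon_{0}}$, so the criterion applies and $\nu$ is unique.

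Finally, for part \textup{(2)}: since $\nu=\nu\mathscr{P}$ and $\HM(gM_{\tau})\HM\bigl([gM_{\tau}]_{\Gamma}M_{w}\bigr)=\HM(gM_{\tau}M_{w})$, a short computation on the $\pi$-system $\{F\times\sigma^{I}_{w}(\Sigma^{I})\}$ ($F\subset\widetilde{\Gamma}$ Borel, $w\in W^{I}_{*}$) gives $\mathbb{P}_{\nu}\circ\theta^{-1}=\mathbb{P}_{\nu}$, and, $\nu$ being the \emph{unique} invariant probability measure of the chain, it is an extreme point of the convex set of these, which is equivalent to ergodicity of $\mathbb{P}_{\nu}$ under $\theta$ (recall $X_{n}=X_{0}\circ\theta^{n}$). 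The hard part will be the uniqueness step: even with the estimates of Theorem \ref{thm:AG-HM-Lipschitz} and Corollaries \ref{cor:AG-HM-Lipschitz} and \ref{cor:AG-HM-Lipschitz-I} in hand, one still has to organise the summation over the infinite alphabet $W^{I}_{n}$ so as to extract an $n$-uniform modulus of continuity and, more delicately, to locate an \emph{explicitly} accessible state inside $\Gamma^{\circ}$ --- for which the matrix identities \eqref{eq:M1nM2nM3n}--\eqref{eq:M1n23M2n31M3n12} are indispensable --- and then to check that the chosen form of the abstract e-chain uniqueness theorem really applies on this (a priori merely locally compact) state space.
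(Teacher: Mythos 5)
Your proposal is correct, and its skeleton coincides with the paper's: confine the chain to the compact set $\widetilde{\Gamma}\cap\Gamma_{\varepsilon_{0}}$ via \eqref{eq:AG-HM-I-Gamma-epsilon0}, extract an $n$-uniform equicontinuity of $\{\mathscr{P}^{n}f\}$ from \eqref{eq:AG-HM-Lipschitz-log} and \eqref{eq:AG-HM-Lipschitz-I-Mw} exactly as you do, get existence from Krylov--Bogolyubov, and invoke an abstract uniqueness theorem for equicontinuous Markov--Feller operators. Where you genuinely diverge is in the two remaining steps. For uniqueness the paper simply cites Zaharopol's Theorem 4.1.11, whereas you feed the e-property into a reachable-point criterion (Meyn--Tweedie's e-chain theory, or the Komorowski--Peszat--Szarek lower-bound condition) and construct the accessible state explicitly through the $W^{I}_{3}$-word $(1^{N}2)(12)(31)$, using \eqref{eq:M1n23M2n31M3n12} to see $[gM_{1^{N}2}]_{\Gamma}\to[(1,1,0,1)]_{\Gamma}$ uniformly on $\Gamma_{\varepsilon_{0}}$; this buys a self-contained irreducibility input (which some abstract uniqueness theorem must supply in any case, since equicontinuity alone never forces uniqueness), at the cost of having to pin down the precise form of the criterion you quote and to check its remaining hypotheses (boundedness in probability on average is automatic here by compactness of the restricted state space). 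For $\nu(\widetilde{\Gamma}\cap\Gamma^{\circ}\cap\Gamma_{\varepsilon_{0}})=1$ the paper argues via uniqueness together with the absorption $\mathscr{P}(g,\widetilde{\Gamma}\cap\Gamma^{\circ})=1$ on $\Gamma^{\circ}$, while your absorption-plus-uniform-entry argument with the word $1231$ and the bound $\nu(B)\leq(1-\delta_{0})\nu(B)$ is a correct direct alternative that does not even use uniqueness. Two points to tighten: in the e-property estimate the comparison $|\HM(g_{1}M_{w})-\HM(g_{2}M_{w})|\leq\HM(g_{2}M_{w})\,c_{4,\varepsilon_{0}}e^{c_{4,\varepsilon_{0}}\diam(\Gamma_{\varepsilon_{0}})}\rho_{\Gamma}(g_{1},g_{2})$ should be displayed as the intermediate step (you state only its summed consequence), and in part (2) the equivalence ``unique invariant law $\Leftrightarrow$ ergodicity'' is literally a statement about the path-space shift, so you should add the observation that $(g,\omega)\mapsto\bigl((X_{n},\omega_{n+1})\bigr)_{n\in\mathbb{N}\cup\{0\}}$ identifies $(\Omega,\theta,\mathbb{P}_{\nu})$ with a stationary Markov shift before invoking extremality; this is precisely what the paper's appeal to ``standard arguments'' (with details in \cite[Section 8]{K:WeylAG}) covers.
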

\begin{proof}
\begin{itemize}[label=\textup{(1)},align=left,leftmargin=*,parsep=0pt,itemsep=0pt]
\item[\textup{(1)}]Since
	$\mathscr{P}(g,\widetilde{\Gamma}\cap\Gamma_{\varepsilon_{0}})=1$
	for any $g\in\widetilde{\Gamma}$ by \eqref{eq:AG-HM-I-Gamma-epsilon0},
	any Borel probability measure $\nu$ on $\widetilde{\Gamma}$ with the property
	$\nu=\int_{\widetilde{\Gamma}}\mathscr{P}(g,\cdot)\,d\nu(g)$ must satisfy
	$\nu(\widetilde{\Gamma}\cap\Gamma_{\varepsilon_{0}})=1$, and the Markov chain
	$\{X_{n}\}_{n\in\mathbb{N}\cup\{0\}}$ can be restricted to the \emph{compact} set
	$\widetilde{\Gamma}\cap\Gamma_{\varepsilon_{0}}$. Then we easily see from
	Corollary \ref{cor:AG-HM-Lipschitz-I} that its transition function
	$\mathscr{P}_{0}:=\mathscr{P}|_{\widetilde{\Gamma}\cap\Gamma_{\varepsilon_{0}}}$
	has the property that $\{\mathscr{P}_{0}^{n}f\}_{n\in\mathbb{N}\cup\{0\}}$ is
	uniformly equicontinuous on $\widetilde{\Gamma}\cap\Gamma_{\varepsilon_{0}}$ for any
	continuous function $f:\widetilde{\Gamma}\cap\Gamma_{\varepsilon_{0}}\to\mathbb{R}$.
	Now we can easily conclude the existence of $\nu$ from the classical theorem
	of Krylov and Bogolioubov (see \cite[Theorem 1.10]{Hairer:convergence}),
	its uniqueness from \cite[Theorem 4.1.11]{Zaharopol:InvProb2005}, and
	$\nu(\widetilde{\Gamma}\cap\Gamma^{\circ})=1$ from the uniqueness of $\nu$ and
	the fact that $\mathscr{P}(g,\widetilde{\Gamma}\cap\Gamma^{\circ})=1$ for any
	$g\in\widetilde{\Gamma}\cap\Gamma^{\circ}$ by \eqref{eq:M1n23M2n31M3n12}.
\item[\textup{(2)}]This can be deduced from (1) by standard arguments;
	see \cite[Section 8]{K:WeylAG} for details.\hspace*{-9.8pt}\qedhere
\end{itemize}
\end{proof}
Now it is not difficult to verify \cite[Conditions I.1--I.4]{Kesten:AOP1974}
on the basis of Proposition \ref{prop:invariant-meas}, \eqref{eq:AG-HM-comparable},
\eqref{eq:AG-HM-Lipschitz-log} and \eqref{eq:AG-HM-Lipschitz-I-Mw}, which allows us to apply
\cite[Theorem 2]{Kesten:AOP1974} to the present case and thereby to get the following theorem.
\begin{thm}[{\cite[Theorem 2]{Kesten:AOP1974} applied to the present case}]\label{thm:Kesten-renewal}
Let $\Gamma'$ denote either of $\Gamma$ and $\Gamma^{\circ}$, set
$\widetilde{\Gamma}'_{\varepsilon_{0}}:=\widetilde{\Gamma}\cap\Gamma'\cap\Gamma_{\varepsilon_{0}}$,
and let $f:\widetilde{\Gamma}'_{\varepsilon_{0}}\times\mathbb{R}\to\mathbb{R}$ be continuous and satisfy
$|f(g,s)|\leq ce^{-|s|/c}$ for any $(g,s)\in\widetilde{\Gamma}'_{\varepsilon_{0}}\times\mathbb{R}$
for some $c\in(0,+\infty)$. Then for any $g\in\widetilde{\Gamma}'_{\varepsilon_{0}}$,%
\begin{equation}\label{eq:Kesten-renewal}
\lim_{s\to+\infty}\mathbb{E}_{g}\Bigl[\sum\nolimits_{n\in\mathbb{N}\cup\{0\}}f(X_{n},s-\mathscr{V}_{n})\Bigr]
	=\Bigl(\int_{\widetilde{\Gamma}}\mathbb{E}_{y}[u_{0}]\,d\nu(y)\Bigr)^{-1}
		\int_{\widetilde{\Gamma}'_{\varepsilon_{0}}}\int_{\mathbb{R}}f(y,s)\,ds\,d\nu(y).\mspace{-8.435mu}
\end{equation}
\end{thm}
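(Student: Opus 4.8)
The theorem is the specialization of \cite[Theorem 2]{Kesten:AOP1974} to the Markov chain and additive functional of Definition \ref{dfn:Markov-chain-AG}, so the plan is to check that \cite[Conditions I.1--I.4]{Kesten:AOP1974} hold in the present situation and then to invoke that theorem. We are already inside Kesten's framework: $\{X_{n}\}_{n\in\mathbb{N}\cup\{0\}}$ is a Feller Markov chain which, by \eqref{eq:AG-HM-I-Gamma-epsilon0}, lives from time $1$ on in the \emph{compact} set $\widetilde{\Gamma}\cap\Gamma_{\varepsilon_{0}}$; $\mathscr{V}_{n}=\sum_{k=0}^{n-1}u_{k}$ is an additive functional whose increment $u_{n}$ has, as recorded after Definition \ref{dfn:Markov-chain-AG} in the sense of \cite[(1.1)]{Kesten:AOP1974}, conditional law depending only on $(X_{n},X_{n+1})$; and the bound $|f(g,s)|\leq ce^{-|s|/c}$ makes $\sum_{n}f(X_{n},s-\mathscr{V}_{n})$ absolutely convergent and provides the direct Riemann integrability needed for a key-renewal conclusion. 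Three of Kesten's four conditions are then immediate from Section \ref{sec:AG-geometry}: the \emph{ergodicity/recurrence} requirements follow from Proposition \ref{prop:invariant-meas} (the unique $\mathscr{P}$-stationary probability measure $\nu$, and the $\theta$-ergodicity of $\mathbb{P}_{\nu}$) together with the uniform equicontinuity of $\{\mathscr{P}^{n}f\}_{n\in\mathbb{N}\cup\{0\}}$ on $\widetilde{\Gamma}\cap\Gamma_{\varepsilon_{0}}$ that, as in the proof of Proposition \ref{prop:invariant-meas}, comes from Corollary \ref{cor:AG-HM-Lipschitz-I}; the \emph{drift} requirements hold because on $\widetilde{\Gamma}$ one has $u_{0}(g,\omega)=-d^{-1}\log\HM\bigl(gM_{[\omega]^{I}_{1}}\bigr)$, which by \eqref{eq:AG-HM-I-comparable-norm}, the identity $\sum_{\tau\in I}\HM(gM_{\tau})=1$ and compactness is strictly positive, bounded below by a positive constant, and of finite $\mathbb{P}_{g}$-expectation, uniformly in $g\in\widetilde{\Gamma}$, so the stationary mean drift $\int_{\widetilde{\Gamma}}\mathbb{E}_{y}[u_{0}]\,d\nu(y)$ lies in $(0,+\infty)$ and $\mathscr{V}_{n}\uparrow+\infty$ almost surely; and the \emph{regularity} requirements follow from the Lipschitz-type estimates \eqref{eq:AG-HM-comparable}, \eqref{eq:AG-HM-Lipschitz-log} and \eqref{eq:AG-HM-Lipschitz-I-Mw}, which control the $g$-dependence of $X_{1}$, of $u_{0}$ and of the one-step kernel $\mathscr{P}(g,\cdot)$.

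The least routine of the four conditions, and the step I expect to require the most genuine work, is the \emph{non-arithmeticity} of the additive functional: one must exclude the existence of $\lambda>0$, $b\in\mathbb{R}$ and a Borel map $h\colon\widetilde{\Gamma}\to\mathbb{R}$ with $u_{0}-b-h(X_{1})+h(X_{0})\in\lambda\mathbb{Z}$ holding $\mathbb{P}_{\nu}$-almost surely, since in the lattice case the left-hand side of \eqref{eq:Kesten-renewal} would oscillate rather than converge. The plan is the classical periodic-orbit argument. For a periodic word $w\in W^{I}_{*}\setminus\{\emptyset\}$ the induced self-map $g\mapsto[gM_{w}]_{\Gamma}$ of $\widetilde{\Gamma}\cap\Gamma_{\varepsilon_{0}}$ is scale-invariant (hence descends to the space of rays in $\Gamma$) and, for appropriate $w$, eventually a strict contraction, so it has a unique fixed point $g_{w}$, to which one attaches the period value $\mathscr{V}(w):=-d^{-1}\log\HM(g_{w}M_{w})\in(0,+\infty)$; arithmeticity would force $\mathscr{V}(w)/\mathscr{V}(w')\in\mathbb{Q}$ for all such $w,w'$. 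It therefore suffices to exhibit two periodic words whose period values have irrational ratio, and by \eqref{eq:AG-HM-I-comparable-norm} this reduces, up to a uniformly bounded (hence here harmless) correction, to the multiplicative independence of the leading eigenvalues of the corresponding nonnegative matrix products, which one verifies by an explicit computation with the matrices displayed in \eqref{eq:M1nM2nM3n}--\eqref{eq:M1n23M2n31M3n12}. A reassuring consistency check is that Theorem \ref{thm:circle-counting-AG} already asserts that the relevant limit genuinely exists, so non-arithmeticity must hold.

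Once \cite[Conditions I.1--I.4]{Kesten:AOP1974} have been verified, \cite[Theorem 2]{Kesten:AOP1974} applies to the chain $\{X_{n}\}_{n\in\mathbb{N}\cup\{0\}}$, the additive functional $\mathscr{V}_{n}$ and the reward $f$, and its conclusion is exactly \eqref{eq:Kesten-renewal}: the key-renewal limit whose normalizing factor is the reciprocal of the stationary mean drift $\int_{\widetilde{\Gamma}}\mathbb{E}_{y}[u_{0}]\,d\nu(y)$ and whose numerator is the integral of $f$ against the product of $\nu$ with Lebesgue measure on $\mathbb{R}$. The detailed verification of the four conditions in the present setting is carried out in \cite[Section 8]{K:WeylAG}.
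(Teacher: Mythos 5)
Your proposal takes essentially the same route as the paper: the paper's own argument consists precisely of verifying Kesten's Conditions I.1--I.4 on the basis of Proposition \ref{prop:invariant-meas} and the estimates \eqref{eq:AG-HM-comparable}, \eqref{eq:AG-HM-Lipschitz-log} and \eqref{eq:AG-HM-Lipschitz-I-Mw}, and then invoking \cite[Theorem 2]{Kesten:AOP1974}, with the detailed verification deferred to \cite{K:WeylAG}. Your extra sketch of the drift bounds and of the non-arithmeticity check via periodic words supplies additional detail consistent with that same plan rather than a different approach.
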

Finally, we can easily deduce Theorem \ref{thm:general-counting-AG} from Theorem \ref{thm:Kesten-renewal}.
The function $\mathscr{R}_{0}$ as in \eqref{eq:general-counting-AG-renewal} is obviously
not continuous, but this problem can be avoided by considering the function
$\mathscr{Z}(g,t):=\sum_{n\in\mathbb{N}}e^{-\lambda_{n}(g)t}$ instead of
$\mathscr{N}(g,\lambda)$. Indeed, the boundedness of $\widetilde{\mathscr{N}}$
implies that of the function $\widetilde{\mathscr{Z}}(g,s):=e^{-ds}\mathscr{Z}(g,e^{-s})$,
and $\widetilde{\mathscr{Z}}$ and the continuous function
$\mathscr{R}(g,s):=e^{-ds}\bigl(\mathscr{Z}(g,e^{-s})-\sum_{\tau\in I}\mathscr{Z}(gM_{\tau},e^{-s})\bigr)$
on $\widetilde{\Gamma}'_{\varepsilon_{0}}\times\mathbb{R}$ are easily seen to satisfy
the same properties as those for $\widetilde{\mathscr{N}},\mathscr{R}_{0}$ mentioned above.
It follows that Theorem \ref{thm:Kesten-renewal} is applicable to $f=\mathscr{R}$ and
implies that $\lim_{s\to+\infty}\widetilde{\mathscr{Z}}(g,s)=c_{0}'$ for any
$g\in\widetilde{\Gamma}'_{\varepsilon_{0}}$ for some $c_{0}'\in[0,+\infty)$, but
$c_{0}'=0$ would mean $\mathscr{R}|_{\supp[\nu]\times\mathbb{R}}=0$ by
\eqref{eq:Kesten-renewal} and, together with \eqref{eq:general-counting-AG-pre-renewal} and
\eqref{eq:general-counting-AG-renewal} for $\widetilde{\mathscr{Z}},\mathscr{R}$, would easily
result in the contradiction that $\widetilde{\mathscr{Z}}|_{\supp[\nu]\times\mathbb{R}}=0$,
where $\supp[\nu]$ denotes the smallest closed subset of $\widetilde{\Gamma}'_{\varepsilon_{0}}$
with $\nu(\supp[\nu])=1$. Thus $c_{0}'\in(0,+\infty)$, and now Theorem \ref{thm:general-counting-AG}
follows by an application of Karamata's Tauberian theorem \cite[p.\ 445, Theorem 2]{Feller:VolII2nd}
to $\mathscr{N}(g,\cdot)$ for each $g\in\widetilde{\Gamma}'_{\varepsilon_{0}}$.
%
\vspace*{-2.02pt}%
{\footnotesize%
}%
{\small\noindent Department of Mathematics, Graduate School of Science, Kobe University\\
Rokkodai-cho 1-1, Nada-ku, 657-8501 Kobe, Japan\\
\texttt{nkajino@math.kobe-u.ac.jp}}%
%
\end{document}